\DeclareMathOperator{\Aut}{Aut}
\title[Analogs of the lower and upper central series in skew braces: a survey]{Analogs of the lower and upper central series \\in skew braces: a survey}
\author{Cindy (Sin Yi) Tsang}
\abstract{%
    A skew brace is a ring-like and group-like algebraic structure that was introduced in the study of set-theoretic solutions to the Yang-Baxter equation. In this survey paper, we shall consider the left series, right series, socle series, and annihilator series of skew braces. They may be regarded as analogs of the lower and upper central series of groups. Other than some well-known facts regarding these series, we shall prove several new results about the relationship among their terms. We shall also consider the lower central series of skew braces that was defined by Bonatto and Jedli\v{c}ka. As we shall explain, it seems to be the ``correct" analog of the lower central series for skew braces. Concerning this, we shall also discuss the notion of the lower central series of ideals that is due to Ballester-Bolinches et al.
    }
\keywords{skew brace, left series, right series, socle, annihilator.}
\begin{document}

\section{Introduction}

Skew brace is an algebraic structure that was introduced in the study of set-theoretic solutions to the Yang-Baxter equation \cite{Rump0, GV}. We shall omit the details, but skew brace also has connections with other objects, such as regular subgroups of the holomorph \cite[Section 4]{GV}, Rota-Baxter groups \cite{RB}, post-groups \cite{post}, and Hopf-Galois structures \cite{SV, ST}. 

A \textit{skew (left) brace} is any set $A= (A,\cdot,\circ)$ endowed with two group operations $\cdot$ and $\circ$ such that the so-called (left) brace relation
\begin{equation}\label{brace relation} a\circ (b \cdot c ) = (a\circ b)\cdot a^{-1}\cdot (a\circ c)
\end{equation}
holds for all $a,b,c\in A$. For each $a\in A$, we shall write $a^{-1}$ for the inverse of $a$ in $(A,\cdot)$, and $\overline{a}$ for the inverse of $a$ in $(A,\circ)$. It is easy to show that $(A,\cdot)$ and $(A,\circ)$ share the same identity element, which we shall denote by $1$. We refer to $(A,\cdot)$ and $(A,\circ)$, respectively, as the \textit{additive group} and \textit{multiplicative group} of the skew brace $A$. A \textit{brace} is a skew brace with an abelian additive group.

For example, given any group $(A,\cdot)$, we can turn it into a skew brace by defining $\circ$ to be the same group operation $\cdot$, or the opposite operation $\cdot^{\mbox{\tiny op}}$ of $\cdot$ that is given by $a\cdot^{\mbox{\tiny op}} b = b\cdot a$. In some sense, skew braces that arise in this way are the same thing as groups. For this reason, skew braces of the forms $(A,\cdot,\cdot)$ and $(A,\cdot,\cdot^{\mbox{\tiny op}})$, respectively, are said to be \textit{trivial} and \textit{almost trivial}.

There are many similarities between groups and skew braces. Indeed, many concepts and results in group theory have been generalized to skew braces. We briefly survey a few examples here for the interested reader.
\begin{enumerate}[$\bullet$]
\item Factorization of skew braces was considered in \cite{factor} and an analog of It\^{o}'s theorem was obtained in \cite[Theorem 3.5]{factor}. Using opposite skew braces, the author \cite{TsangIto} proved a genuine generalization of It\^{o}'s theorem for skew braces, from which the usual It\^ {o}'s theorem may be recovered as a special case. Ballester-Bolinches announced at the conference ``Solving the Yang-Baxter Equation New Frontiers and Approaches" at Levico Terme, Italy (2025) that he and his collaborators obtained another such generalization. Their result (now available in the preprint \cite{abelian-product}) has the advantage that it does not involve the opposite skew brace.
\item Isoclinism of skew braces and the notion of stem skew braces were introduced in \cite{isoclinism}. As proven in \cite[Theorem 2.18]{isoclinism}, analogous to the case of groups, every skew brace is isoclinic to a stem skew brace. Schur cover of finite skew braces was defined in \cite{Schur}, and just like the case of groups, it is unique up to isoclinism by \cite[Theorem 3.19]{Schur}. 
\item Representation of skew braces was introduced in \cite{Schur} in relation to Schur covers, and its definition is based on \cite{Zhu}. Its theory was further explored in \cite{Kozakai, rep}. P\'{e}rez-Altarriba presented at the aforementioned conference at Levico Terme, Italy, that he and his collaborators came up with a different definition of representation of skew braces that they believe better reflects the structures of skew braces. It heavily relies on the connection between skew braces and trifactorized groups described in \cite{trifactor}.
\item Solvability of braces was defined in \cite{asymmetric} and was later extended to all skew braces in \cite{KSV}. Their definition has the pathology that every group, when regarded as a trivial skew brace, would be solvable. To avoid this issue, a new definition was introduced in \cite{soluble}, and it was also shown in \cite[Theorem B]{soluble} that solvable skew braces have a rich ideal structure.
\item Supersolvability of skew braces was studied in \cite{supersoluble}. Among the many similarities with supersolvable groups, it was shown in \cite[Corollary 3.9]{supersoluble} that skew braces of squarefree order are supersolvable and in \cite[Theorem 3.11]{supersoluble} that supersolvable skew braces always have Sylow towers.
\end{enumerate}
Finally, we mention two families of skew braces that seem to be much more difficult to classify than their group-theoretic counterparts.
\begin{enumerate}[$\bullet$]
\item A skew brace $A =(A,\cdot,\circ) $ is  \textit{simple} if $A\neq1$ and $A$ has no ideal except $1$ and itself. If either $(A,\cdot)$ or $(A,\circ)$ is a simple group, then certainly $A$ is a simple skew brace. However, there are infinitely many simple skew braces whose additive and multiplicative groups are both non-abelian but solvable \cite{Byott-simple}, as well as simple braces that are not of prime order \cite{simple}.
\item A skew brace $A=(A,\cdot,\circ)$ is \textit{Dedekind} if every sub-skew brace of $A$ is an ideal. This concept was introduced in \cite{Dedekind}, and as remarked there, it is a very challenging problem to try and obtain a structural theorem for such skew braces. Nonetheless, by \cite[Theorem~13]{Dedekind}, we know that every finite Dedekind brace is annihilator nilpotent.\end{enumerate}
We refer the reader to \cite{Jordan, min,radical, Smok2} for some further work that explores the similarities between groups (or rings) and skew braces.

In this paper, we shall focus on the nilpotency of skew braces. In the context of skew braces, there are many types of nilpotency, coming from the different analogs of central series. In Sections \ref{sec:lower} and \ref{sec:upper}, we shall survey the central series that are relevant in the definition of the following types of nilpotency:
\begin{enumerate}[(i)]
\itemsep=-4pt
\item left nilpotency,
\item right nilpotency,
\item socle nilpotency,
\item annihilator nilpotency.
\end{enumerate}
In Section \ref{sec:compare}, we shall survey some results that compare the different types of nilpotency. We give self-contained proofs for all of them, and as a consequence, we see that finiteness in \cite[Corollary 2.11]{central} can be dropped. In Section \ref{sec:relation}, we shall consider some inclusion relations among the terms of the various central series. We give counterexamples for all but one of them, and we prove that the remaining inclusion always holds (see Theorem~\ref{main thm}). Finally, in Section \ref{sec:central}, we shall delve into annihilator nilpotency. We discuss another analog of the lower central series, which seems to be the ``correct" analog and characterizes annihilator nilpotency. We also explore the notion of ``relative" annihilator nilpotency for ideals that is due to \cite{central-ideal}. We mention in passing that, related to left and right nilpotency, we have the concepts of left and right nil, which were first introduced by \cite{nilpotent} and later investigated in \cite{nil}.

\begin{remark}
Nilpotency of skew braces is not only important from an algebraic point of view but also from a set-theoretic solution perspective. As pointed out in \cite{nilpotent'}, nilpotency of skew braces is closely related to the multipermutation level of solutions (also see \cite[Proposition 5.3]{Jordan}). In addition, two classes of skew braces that behave like $FC$-groups were considered in \cite{derived}, and it was shown that they are closely connected to solutions whose derived solution is indecomposable. Nilpotency of such skew braces was also studied and some analogs of group-theoretic results were proven in \cite[Section 3]{derived}. 
\end{remark}

\begin{remark}
Skew brace is a powerful tool for analyzing set-theoretic solutions of the Yang-Baxter equation. However, all of the solutions (including the $z$-deformations defined in \cite{novel}) that arise from skew braces are non-degenerate. To study the solutions that are only left non-degenerate, a similar algebraic structure called \textit{semi-brace} was introduced in \cite{semi}, and it was shown in \cite[Theorem 9]{semi} that one can construct left non-degenerate solutions from semi-braces. The four types of nilpotency mentioned above can be defined in the setting of semi-braces, and one can find the details in \cite{semi-nil}.
\end{remark}

In the rest of this paper, we let $A = (A,\cdot,\circ)$ denote a skew brace. As usual, for each $a\in A$, we consider the map
\[ \lambda_a : A \longrightarrow A ;\,\ \lambda_a(b) = a^{-1}\cdot (a\circ b),\]
which is easily verified to be an automorphism on $(A,\cdot)$. Moreover, let
\[ \lambda : (A,\circ) \longrightarrow \Aut(A,\cdot);\,\ a\mapsto \lambda_a\]
denote the so-called \textit{lambda map} of $A$, which is known to be a group homomorphism \cite[Proposition 1.9]{GV}. For each $a,b\in A$, let us also define
\[ a*b = a^{-1}\cdot (a\circ b) \cdot b^{-1} = \lambda_a(b)\cdot b^{-1}.\]
This so-called \textit{asterisk} or \textit{star product}, which was introduced and first studied in \cite[Section 2]{KSV}, is a tool that measures the difference between the two group operations $\cdot$ and $\circ$ attached to the skew brace. 

\section{Preliminaries on skew braces}\label{sec:prelim}

In this section, we recall some basic definitions and simple facts that we shall need.

\begin{definition} A subset $B$ of $A$ is a \textit{sub-skew brace} if it is a subgroup of both $(A,\cdot)$ and $(A,\circ)$. In this case, clearly $(B,\cdot,\circ)$ is also a skew brace.
\end{definition}

\begin{definition}\label{def:left ideal} A subset $I$ of $A$ is a \textit{left ideal} if it is a subgroup of $(A,\cdot)$ and $\lambda_a(I) \subseteq I$, for all $a\in A$.
\end{definition}

Let us make a few remarks. First, given that $I$ is a subgroup of $(A,\cdot)$, for any $a\in A$ and $x\in I$, it is clear that
\[ \lambda_a(x) \in I \,\ \iff \,\ \lambda_a(x)x^{-1}\in I \,\ \iff \,\ a*x\in I.\]
So the terminology ``left ideal" in fact comes from ring theory. Second, a left ideal $I$ of $A$ is automatically a sub-skew brace because 
\[ x \circ y = x\cdot \lambda_x(y)\quad \mbox{and}\quad \overline{x} =\lambda_{\overline{x}}(x)^{-1}\]
for all $x,y\in A$. Finally, since
\[ a\cdot x = a\circ \lambda_{\overline{a}}(x)\quad \mbox{and}\quad a\circ x = a\cdot \lambda_a(x)\]
for all $a,x\in A$, a left ideal $I$ of $A$ always satisfies
\begin{equation}\label{aI}
    a\cdot I = a\circ I
\end{equation}
for all $a\in A$, namely, the left cosets of $I$ with respect to $\cdot$ and $\circ$ coincide.

\begin{definition}\label{def:ideal} A subset $I$ of $A$ is an \textit{ideal} if it is a left ideal of $A$ and is a normal subgroup of both $(A,\cdot)$ and $(A,\circ)$. In this case, we can naturally define a quotient skew brace on the set of cosets
\[ A/I = \{a\cdot I : a\in A \} = \{a\circ I : a\in A\}\]
by (\ref{aI}) and the normality condition.
\end{definition}

Note that ideals are exactly the sub-structures that one needs to form quotient skew braces. The terminology ``ideal" comes from ring theory, and ideals in skew braces are analogs of normal subgroups in groups.

In fact, historically, braces were introduced as a generalization of radical rings \cite{Rump0}, as follows. Given a (non-unital) ring $(A,+,\star)$, define
\[ a\circ b = a + b + a\star b,\]
for all $a,b\in A$. This operation $\circ$ is always associative and admits the zero element as the identity. Recall that $(A,+,\star)$ is said to be \textit{radical} if $(A,\circ)$ is a group, and in this case, we easily check that $(A,+,\circ)$ is a brace. We remark that a non-trivial unital ring cannot be radical because the additive inverse of the identity does not admit an inverse with respect to $\circ$. As is known by \cite{Rump0}, radical rings correspond precisely to \textit{two-sided braces}, i.e. braces for which the right brace relation
\[ (b\cdot c)\circ a = (b\circ a)\cdot a^{-1} \cdot (c\circ a),\]
holds for all $a,b,c\in A$ in addition to \eqref{brace relation}.

\begin{example} For a skew brace $A = (A,+,\circ)$ that is defined, via the above construction, from a radical ring $(A,+,\star)$, we have
\[ \lambda_a : A\longrightarrow A;\,\ \lambda_a(b) = b + a\star b,\]
for every $a\in A$. Hence, the left ideals of $A$ coincide with the left ideals of the underlying radical ring in this case. Moreover, we have
\[ a*b = -a + (a+b+a\star b) -b = a \star b,\]
for all $a,b\in A$, so the star product coincides with the multiplication of the underlying radical ring.
\end{example}

\begin{example}\label{ex:trivial} For a trivial skew brace $A = (A,\cdot,\cdot)$, the map
\[ \lambda_a: A \longrightarrow A;\,\ \lambda_a(b) = b \]
is the identity map, for every $a\in A$. Thus, the left ideals of $A$ coincide with the subgroups of $(A,\cdot)$ in this case. Moreover, we have
\[ a * b = a^{-1}\cdot (a\cdot b )\cdot b^{-1} = 1,\]
for all $a,b\in A$, so the star product only returns the identity.
\end{example} 

\begin{example}\label{ex:almost trivial} For an almost trivial skew brace $A = (A,\cdot,\cdot^{\mbox{\tiny op}})$, the map
\[ \lambda_a : A \longrightarrow A;\,\ \lambda_a(b) = a^{-1}\cdot b \cdot a\]
is conjugation by $a^{-1}$, for every $a\in A$. Thus, the left ideals of $A$ coincide with the normal subgroups of $(A,\cdot)$ in this case. Moreover, we have
\[ a * b  =a^{-1}\cdot b\cdot a \cdot b^{-1} = [a^{-1},b], \]
for all $a,b\in A$, so the star product is basically the commutator of $(A,\cdot)$. 
\end{example}

Below, we collect some well-known identities. The proofs are all straightforward, so we shall state them without references.

\begin{lemma}\label{lem:identities} For any $a,x,y\in A$, we have the identities
\begin{enumerate}[$(1)$]
\item $ a* (x\cdot y) = (a* x)\cdot x\cdot (a* y)\cdot x^{-1} ;$
\item $(x\circ y) * a = (x*(y*a))\cdot (y*a)\cdot (x*a);$
\item $\lambda_a(x*y) = (a\circ x\circ \overline{a}) * \lambda_a(y);$
\item $a\circ x\circ \overline{a} = a\cdot \lambda_a(x\cdot(x*\overline{a}))\cdot a^{-1}$.
\end{enumerate}
\end{lemma}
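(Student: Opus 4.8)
The plan is to reduce everything to the two structural facts already recorded: that $\lambda\colon(A,\circ)\to\Aut(A,\cdot)$ is a group homomorphism --- so each $\lambda_a$ is an automorphism of $(A,\cdot)$ and $\lambda_{\overline{a}}=\lambda_a^{-1}$ --- and that the star product and the two operations are linked by $a\circ b=a\cdot\lambda_a(b)$ and $a*b=\lambda_a(b)\cdot b^{-1}$. One auxiliary observation I would isolate at the outset is $\lambda_a(\overline{a})=a^{-1}\cdot(a\circ\overline{a})=a^{-1}$; this is the crux of part~(4).

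For~(1), I would write $a*(x\cdot y)=\lambda_a(x\cdot y)\cdot(x\cdot y)^{-1}=\lambda_a(x)\cdot\lambda_a(y)\cdot y^{-1}\cdot x^{-1}$ using that $\lambda_a$ respects $\cdot$, then recognise $\lambda_a(y)\cdot y^{-1}=a*y$ and $\lambda_a(x)=(a*x)\cdot x$, which yields the claimed expression after rebracketing. Part~(2) is the same idea applied twice: since $\lambda_{x\circ y}=\lambda_x\lambda_y$, we get $(x\circ y)*a=\lambda_x(\lambda_y(a))\cdot a^{-1}$; substituting $\lambda_y(a)=(y*a)\cdot a$, distributing $\lambda_x$ over the product, and applying $\lambda_x(z)=(x*z)\cdot z$ to each factor gives $(x*(y*a))\cdot(y*a)\cdot(x*a)\cdot a$, and the trailing $a$ cancels the $a^{-1}$.

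For~(3), I would start from $x*y=\lambda_x(y)\cdot y^{-1}$, apply $\lambda_a$ to obtain $\lambda_a(\lambda_x(y))\cdot\lambda_a(y)^{-1}$, and then use the homomorphism property in the form $\lambda_a\lambda_x=\lambda_{a\circ x}=\lambda_{a\circ x\circ\overline{a}}\,\lambda_a$. Hence $\lambda_a(\lambda_x(y))=\lambda_{a\circ x\circ\overline{a}}(\lambda_a(y))$, so the expression equals $\lambda_{a\circ x\circ\overline{a}}(\lambda_a(y))\cdot\lambda_a(y)^{-1}=(a\circ x\circ\overline{a})*\lambda_a(y)$. For~(4), I would first simplify the argument of $\lambda_a$: from $x*\overline{a}=x^{-1}\cdot(x\circ\overline{a})\cdot\overline{a}^{-1}$ one gets $x\cdot(x*\overline{a})=(x\circ\overline{a})\cdot\overline{a}^{-1}$, whence $\lambda_a(x\cdot(x*\overline{a}))=\lambda_a(x\circ\overline{a})\cdot\lambda_a(\overline{a})^{-1}=a^{-1}\cdot(a\circ x\circ\overline{a})\cdot a$, using $\lambda_a(z)=a^{-1}\cdot(a\circ z)$ together with $\lambda_a(\overline{a})=a^{-1}$. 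Conjugating by $a$ then recovers $a\circ x\circ\overline{a}$.

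None of these is deep: each is an ``unfold the definitions, push $\lambda_a$ through a $\cdot$-product, refold'' computation, so I do not expect a genuine obstacle. The only point that needs a little care is~(4), where one must track which inverse --- with respect to $\cdot$ or to $\circ$ --- appears at each step and must invoke $\lambda_a(\overline{a})=a^{-1}$; the rest is routine bookkeeping with $a\circ b=a\cdot\lambda_a(b)$, $a*b=\lambda_a(b)\cdot b^{-1}$, and the homomorphism property of $\lambda$.
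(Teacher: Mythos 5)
Your proposal is correct, and it is precisely the routine unfold-the-definitions verification that the paper alludes to when it states these identities without proof; all four computations (including the key auxiliary fact $\lambda_a(\overline{a})=a^{-1}$ and the rewriting $\lambda_a\lambda_x=\lambda_{a\circ x\circ\overline{a}}\lambda_a$ in part (3)) check out.
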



\section{Analogs of the lower central series}\label{sec:lower}

For a group $G$, the \textit{lower central series} is defined by
\[ \gamma_1(G) = G,\quad \gamma_{n+1}(G) = [G,\gamma_n(G)]\]
in terms of the commutator operator $[ \,\ , \,\ ]$. Using the star product, this may be naturally extended to skew braces. For any subsets $X,Y$ of $A$, we define $X*Y$ to be the subgroup of $(A,\cdot)$ generated by the elements $x*y$, for $x\in X$ and $y\in Y$. However, the star product $*$ is not a commutative operation on sub-skew braces, so after forming $A*A$, it matters whether we $*$-multiply $A$ from the left or the right. In other words, the star product $*$ is not an associative operation on skew braces, and there are two different natural analogs of the lower central series for a skew brace, as follows.

The \textit{left series} of $A$ is defined by
\[ A^{1} = A, \quad A^{n+1} = A*A^{n},\]
and the \textit{right series} of $A$ is defined by
\[ A^{(1)}=A, \quad A^{(n+1)} = A^{(n)} * A,\]
for all $n\in \mathbb{N}$. They were introduced by Rump in \cite{Rump0} for braces and were later extended to all skew braces in \cite{nilpotent}. In the case that $A = (A,\cdot,\cdot^{\mbox{\tiny op}})$ is an almost trivial skew brace, the left and right series are simply the lower central series of $(A,\cdot)$. However, in general, the left and right series are not even comparable, as the next example shows.

\begin{example}\label{ex:pq} Let $p,q$ be any primes with $p\equiv 1\hspace{-1mm}\pmod{q}$. The skew 
braces of order $pq$ were classified by \cite{pq}. Here we consider two of them. Let
\[ C_p = \langle a \rangle \mbox{ and } C_q = \langle b\rangle, \]
respectively, denote the cyclic groups of order $p$ and $q$. Let $k$ be any integer of multiplicative order $q$ modulo $p$. 
\begin{enumerate}[(i)]
\item First consider the brace $A = (C_p\times C_q,\cdot,\circ)$, where
\begin{align*}
 (a^i,b^j) \cdot (a^s, b^t) & = (a^{i+s},b^{j+t})\\
 (a^{i},b^{j}) \circ (a^{s}, b^{t}) & = (a^{i+k^{j}s},b^{j+t})
\end{align*}
for all $i,j,s,t\in\mathbb{Z}$. Since
\begin{equation}\label{pq*}
(a^{i},b^{j}) * (a^{s}, b^{t}) = (a^{(k^{j}-1)s},1),
\end{equation}
we see that $A*A= C_p\times \{1\}$. Moreover, observe that
\begin{align*}
    (a^{i},b^{j}) * (a^{s}, 1) &= (a^{(k^{j}-1)s},1),\\
    (a^{i},1) * (a^{s}, b^{t}) &= (1,1).
\end{align*}
It follows that the left series of $A$ is given by
\[ A^1 = C_p\times C_q,\quad A^n =C_p\times\{1\}\mbox{ for }n\geq 2,\]
while the right series of $A$ is given by
\[ A^{(1)}=C_p\times C_q,\quad A^{(2)}=C_p\times \{1\},\quad A^{(n)}=1\mbox{ for }n\geq 3.\]
Here, the right series reaches the identity, but not the left series.
\item Next consider the skew brace $A = (C_p\times C_q,\cdot,\circ)$, where
\begin{align*}
 (a^{i},b^{j}) \cdot (a^{s}, b^{t}) & = (a^{i+ k^{j}s},b^{j+t})\\
 (a^{i},b^{j}) \circ (a^{s}, b^{t}) & = (a^{k^{t}i+k^{j}s},b^{j+t})
\end{align*}
for all $i,j,s,t\in\mathbb{Z}$. Since
\begin{align*}
    & \quad \hspace{1mm} (a^{i},b^{j}) * (a^{s}, b^{t})\\
    & = (a^{-k^{-j}i},b^{-j})\cdot (a^{k^{t}i+k^{j}s},b^{j+t})\cdot (a^{-k^{-t}s},b^{-t})\\
    & = (a^{k^{-j}(k^{t}-1)i+s},b^{t})\cdot (a^{-k^{-t}s},b^{-t})\\
    & = (a^{k^{-j}(k^{t}-1)i},1),
\end{align*}
we see that $A*A= C_p\times \{1\}$. Moreover, observe that
\begin{align*}
    (a^{i},b^{j}) * (a^{s}, 1) &= (1,1),\\
    (a^{i},1) * (a^{s}, b^{t}) &= (a^{(k^{t}-1)i},1).
\end{align*}
It follows that the left series of $A$ is given by
\[ A^1 = C_p\times C_q,\quad A^2 = C_p\times \{1\},\quad A^n=1\mbox{ for }n\geq 3, \]
while the right series of $A$ is given by
\[ A^{(1)}=C_p\times C_q,\quad A^{(n)}= C_p\times \{1\} \mbox{ for }n\geq 2.\]
Here, the left series reaches the identity, but not the right series.
\end{enumerate}
\end{example}

The terms in the lower central series of a group are all normal subgroups. However, as is known by \cite[Propositions 2.1 and 2.2]{nilpotent}, the terms in the left series of a skew brace are only left ideals in general, while those in the right series are always ideals. 

\begin{proposition}\label{prop:left} For all $n\geq 1$, we have that $A^n$ is a left ideal of $A$.
\end{proposition}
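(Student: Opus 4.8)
The plan is to induct on $n$. The base case $n = 1$ is immediate, since $A^1 = A$ is trivially a left ideal of itself. For the inductive step, suppose $A^n$ is a left ideal of $A$; we must show the same for $A^{n+1} = A * A^n$. By the very definition of the star product of subsets, $A^{n+1}$ is already a subgroup of $(A,\cdot)$, so the only thing left to check is that $\lambda_a(A^{n+1}) \subseteq A^{n+1}$ for every $a \in A$.

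Here I would reduce to generators. Since $\lambda_a$ is an automorphism of $(A,\cdot)$ and $A^{n+1}$ is, by definition, the subgroup of $(A,\cdot)$ generated by the elements $b * x$ with $b \in A$ and $x \in A^n$, it suffices to verify that $\lambda_a(b*x) \in A^{n+1}$ for all such $b$ and $x$; the homomorphism property of $\lambda_a$ on $(A,\cdot)$ then propagates this containment to arbitrary products and inverses of generators, hence to all of $A^{n+1}$. The computation of $\lambda_a(b*x)$ is exactly what Lemma~\ref{lem:identities}(3) provides:
\[ \lambda_a(b*x) = (a\circ b \circ \overline{a}) * \lambda_a(x). \]
The first factor $a\circ b\circ\overline{a}$ plainly lies in $A$, and by the inductive hypothesis — applied to the left ideal $A^n$ and the element $x \in A^n$ — we have $\lambda_a(x) \in A^n$. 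Thus the right-hand side is one of the defining generators of $A * A^n = A^{n+1}$, so it lies in $A^{n+1}$, completing the induction.

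There is essentially no obstacle here beyond recognizing the right bookkeeping: the $\lambda$-invariance of $A^{n+1}$ feeds precisely on the $\lambda$-invariance of $A^n$ via Lemma~\ref{lem:identities}(3), which is why the induction closes cleanly. The one point I would be careful to state explicitly is that $\lambda_a$ is a group automorphism of $(A,\cdot)$; this is what licenses the passage from ``$\lambda_a$ sends the generators of $A^{n+1}$ into $A^{n+1}$'' to ``$\lambda_a$ sends all of $A^{n+1}$ into $A^{n+1}$'', and without it the argument would not go through.
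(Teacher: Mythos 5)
Your proof is correct and follows exactly the paper's own argument: induction on $n$, applying Lemma~\ref{lem:identities}(3) to the generators $b*x$ of $A^{n+1}$, using the inductive hypothesis to place $\lambda_a(x)$ in $A^n$, and then invoking $\lambda_a\in\Aut(A,\cdot)$ to pass from generators to the whole subgroup. Nothing to add.
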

\begin{proof} The case $n=1$ is trivial. Now, suppose that $A^n$ is a left ideal of $A$. For any $a,b\in A$ and $x\in A^n$, we then have $\lambda_a(x) \in A^n$, and so by Lemma \ref{lem:identities}(3) we have
\[ \lambda_a(b*x) = (a\circ b \circ \overline{a}) * \lambda_a(x) \in A^{n+1}.\]
Since the elements $b*x$ generate $A^{n+1}$ with respect to $\cdot$ and $\lambda_a\in \mathrm{Aut}(A,\cdot)$, we deduce that $\lambda_a(A^{n+1}) \subseteq A^{n+1}$ , for all $a\in A$. Hence, indeed $A^{n+1}$ is a left ideal of $A$.
\end{proof}

\begin{proposition}\label{prop:right} For all $n\geq1$, we have that $A^{(n)}$ is an ideal of $A$.
\end{proposition}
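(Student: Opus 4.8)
The plan is to induct on $n$, the case $n=1$ being immediate since $A^{(1)}=A$. For the inductive step, assume $I:=A^{(n)}$ is an ideal and set $J:=A^{(n+1)}=I*A$. By construction $J$ is a subgroup of $(A,\cdot)$, so what remains is to show that $J$ is a left ideal and that it is normal in both $(A,\cdot)$ and $(A,\circ)$. The fact that $J$ is a left ideal is proved exactly as in Proposition~\ref{prop:left}: for $a\in A$, $x\in I$, $y\in A$, Lemma~\ref{lem:identities}(3) gives $\lambda_a(x*y)=(a\circ x\circ\overline{a})*\lambda_a(y)$, and $a\circ x\circ\overline{a}\in I$ since $I$ is an ideal, so $\lambda_a(x*y)\in I*A=J$; as $\lambda_a\in\Aut(A,\cdot)$ and the elements $x*y$ generate $J$ with respect to $\cdot$, this gives $\lambda_a(J)\subseteq J$.

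Before turning to normality I would record two easy consequences of the inductive hypothesis. First, $I*A\subseteq I$: for $x\in I$ and $a\in A$, from $x\circ\overline{a}=\overline{a}\circ(a\circ x\circ\overline{a})$ and $a\circ x\circ\overline{a}\in I$ one gets $x*\overline{a}=x^{-1}\cdot\overline{a}\cdot\lambda_{\overline{a}}(a\circ x\circ\overline{a})\cdot\overline{a}^{-1}$, which lies in $I$ because $I$ is a left ideal and is normal in $(A,\cdot)$; letting $a$ range over $A$ yields $I*A\subseteq I$. Second, $J*A\subseteq J$: the generators of $J*A=(I*A)*A$ are $w*a$ with $w\in I*A$, and $w\in I$ by the first point, so $w*a\in I*A=J$.

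The remaining point is normality, and the part I expect to be the main obstacle is normality in the multiplicative group $(A,\cdot)$. (Normality in $(A,\circ)$ will then come for free, as explained below.) The subtlety is that conjugation by an element in $(A,\circ)$ is \emph{not} in general an automorphism of $(A,\cdot)$ --- the right brace relation may fail --- so there is no immediate way to reduce a statement about $\circ$-conjugation to a check on the $\cdot$-generators of $J$. For $(A,\cdot)$ this issue is absent: conjugation by $a$ in $(A,\cdot)$ is an automorphism of $(A,\cdot)$, so it suffices to show $a\cdot(x*y)\cdot a^{-1}\in J$ for all $a\in A$, $x\in I$, $y\in A$. Writing $x*y=x^{-1}\cdot(x\circ y)\cdot y^{-1}=\lambda_x(y)\cdot y^{-1}$ and expanding $a\cdot(x*y)\cdot a^{-1}$, one aims --- using the brace relation together with Lemma~\ref{lem:identities}(1),(3), and the fact that $I$ is an ideal so that $a\cdot x\cdot a^{-1}$ and the various $\circ$-conjugates of $x$ that appear all lie in $I$ --- to rewrite this conjugate as a product of elements of the form $x'*y'$ with $x'\in I$ and $y'\in A$, hence as an element of $I*A=J$. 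This generator computation is the technical heart of the argument. Granting it, normality of $J$ in $(A,\circ)$ follows cleanly: for $z\in J$ and $a\in A$, Lemma~\ref{lem:identities}(4) gives $a\circ z\circ\overline{a}=a\cdot\lambda_a(z\cdot(z*\overline{a}))\cdot a^{-1}$, and here $z\cdot(z*\overline{a})\in J$ because $z\in J$ and $z*\overline{a}\in J*A\subseteq J$, so $\lambda_a(z\cdot(z*\overline{a}))\in J$ as $J$ is a left ideal, and therefore $a\circ z\circ\overline{a}\in J$ by the multiplicative normality just established. This completes the induction, with all the real work concentrated in the $\cdot$-conjugation computation on the generators $x*y$.
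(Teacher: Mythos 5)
Your overall architecture matches the paper's: induct on $n$, get the left-ideal property from Lemma~\ref{lem:identities}(3) exactly as in Proposition~\ref{prop:left}, and deduce normality in $(A,\circ)$ from normality in $(A,\cdot)$ via Lemma~\ref{lem:identities}(4). The auxiliary containments you record, $I*A\subseteq I$ and $J*A\subseteq J$, are correct and correctly justified (though the second also follows from the trivial induction $A^{(n+1)}\subseteq A^{(n)}$). The genuine gap is at precisely the point you yourself flag as ``the technical heart'': you never carry out the computation showing $a\cdot(x*y)\cdot a^{-1}\in J$. You say one ``aims to rewrite this conjugate as a product of elements of the form $x'*y'$'' and then proceed by ``granting it,'' so the claim that closes the induction is asserted rather than proved.

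The missing step is in fact a one-line consequence of Lemma~\ref{lem:identities}(1), which you list among your intended tools but never apply. For $x\in I$ and $a,y\in A$, that identity (with the roles of the variables permuted) reads $x*(a\cdot y)=(x*a)\cdot a\cdot(x*y)\cdot a^{-1}$, whence
\[
a\cdot(x*y)\cdot a^{-1}=(x*a)^{-1}\cdot\bigl(x*(a\cdot y)\bigr)\in I*A=J,
\]
and since $\cdot$-conjugation is an automorphism of $(A,\cdot)$ this suffices on the generators. No appeal to the brace relation, to Lemma~\ref{lem:identities}(3), or to $\circ$-conjugates of $x$ is needed here, so the computation is much less delicate than your sketch anticipates; this is exactly how the paper handles it. With this line inserted, your argument is complete and coincides with the paper's proof.
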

\begin{proof} The case $n=1$ is trivial. Now, suppose that $A^{(n)}$ is an ideal of $A$. 
\begin{enumerate}[(1)]
\item $A^{(n+1)}$ is a left ideal of $A$: For any $a,b\in A$ and $x\in A^{(n)}$, since $A^{(n)}$ is normal in $(A,\circ)$, we have $a\circ x \circ \overline{a} \in A^{(n)}$, and so by Lemma \ref{lem:identities}(3) we have
\[ \lambda_a(x*b) = (a\circ x \circ \overline{a}) * \lambda_a(b) \in A^{(n+1)}.\]
Since the elements $x*b$ generate $A^{(n+1)}$ with respect to $\cdot$ and $\lambda_a\in \mathrm{Aut}(A,\cdot)$, this yields $\lambda_a(A^{(n+1)}) \subseteq A^{(n+1)}$, for all $a\in A$. Hence, indeed $A^{n+1}$ is a left ideal of $A$.
\item $A^{(n+1)}$ is normal in $(A,\cdot)$: For any $a,b\in A$ and $x\in A^{(n)}$, by Lemma \ref{lem:identities}(1) we have
\begin{align*}
a \cdot (x*b) \cdot a^{-1}
& = (x*a)^{-1}\cdot (x*a)\cdot a \cdot (x*b) \cdot a^{-1}\\
& = (x*a)^{-1} \cdot (x*(a\cdot b))\in A^{(n+1)}. 
\end{align*}
Since the elements $x*b$ generate $A^{(n+1)}$ with respect to $\cdot$, it then follows that $A^{(n+1)}$ is normal in $(A,\cdot)$.
\item $A^{(n+1)}$ is normal in $(A,\circ)$: For any $a\in A$ and $y\in A^{(n+1)}$, clearly
\[ y(y*\overline{a}) \in A^{(n+1)}A^{(n+2)} \subseteq A^{(n+1)}.\]
Since $A^{(n+1)}$ is a left ideal of $A$ and is normal in $(A,\cdot)$, by Lemma \ref{lem:identities}(4) we have
\[a\circ y \circ \overline{a}
= a \cdot \lambda_a(y\dot (y* \overline{a})) \cdot a^{-1} \in A^{(n+1)},\]
and this proves that $A^{(n+1)}$ is normal in $(A,\circ)$.
\end{enumerate}
We have thus shown that $A^{(n+1)}$ is an ideal of $A$.    
\end{proof}

We remark that the left and right series of $A$ are not comparable with the lower central series of $(A,\cdot)$ and $(A,\circ)$ in general. Simply take $A = (A,\cdot,\cdot)$ to be a trivial skew brace such that $(A,\cdot)$ is perfect, in which case 
\[ A^n = A^{(n)} = 1,\quad \gamma_n(A,\cdot) = A\]
for all $n\geq 2$. In the other extreme, recall that there are simple braces $A = (A,\cdot,\circ)$ of non-prime order by \cite{simple}, in which case $A*A=A$ and
\[ A^n = A^{(n)}=A,\quad \gamma_n(A,\cdot) = 1\]
for all $n\geq 2$. There should be no surprise about these examples as the star product is not defined in terms of the commutator of the underlying groups.

Here, for simplicity, we only considered the left series and the right series. But instead of $*$-multiplying $A$ only from the left or only from the right, we can do a mixture of both, as follows.

The \textit{Smoktunowicz series} of $A$ is defined by
\[ A^{[1]} = A,\quad A^{[n+1]} = \left\langle
\bigcup_{i=1}^{n} A^{[i]} * A^{[n+1-i]}
\right\rangle,\]
for all $n\in\mathbb{N}$, where the angle brackets $\langle \,\ \rangle$ denote subgroup generation in $(A,\cdot)$. This was introduced by Smoktunowicz in \cite{Smok} for braces and was extended to all skew braces in \cite{nilpotent}. Its terms are all left ideals of $A$ by \cite[Proposition 2.28]{nilpotent}. Let us mention that the Smoktunowicz series is closely related to the left and right series by the following theorem (see \cite[Theorem 1.3]{Smok} for the case of braces and \cite[Theorem 2.30]{nilpotent} for the general case).

\begin{theorem}The following statements are equivalent.
\begin{enumerate}[$(a)$]
\item $A^{[n]}=1$ for some $n\in\mathbb{N}$.
\item $A^{m_1}=1$ and $A^{(m_2)}=1$ for some $m_1,m_2\in\mathbb{N}$.
\end{enumerate}
\end{theorem}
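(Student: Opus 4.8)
The plan is to prove the two implications separately. For $(a)\Rightarrow(b)$ I would show, by a single induction on $n$, that $A^{n}\subseteq A^{[n]}$ and $A^{(n)}\subseteq A^{[n]}$: both are trivial for $n=1$, and assuming them for $n$, the defining formula for $A^{[n+1]}$ contains the summand with index $1$, namely $A*A^{[n]}\supseteq A*A^{n}=A^{n+1}$, and the summand with index $n$, namely $A^{[n]}*A\supseteq A^{(n)}*A=A^{(n+1)}$ (using $A=A^{[1]}$ and the inductive hypothesis). Hence $A^{[n]}=1$ forces $A^{n}=A^{(n)}=1$, so $(b)$ holds with $m_{1}=m_{2}=n$.

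For $(b)\Rightarrow(a)$ I would first record two elementary facts. The Smoktunowicz series is descending: each $A^{[m]}$ is a left ideal, so $A*A^{[m]}\subseteq A^{[m]}$, and then $A^{[m+1]}\subseteq A^{[m]}$ follows by induction on $m$ from the defining formula, since $A^{[k]}*A^{[m+1-k]}\subseteq A^{[k-1]}*A^{[m+1-k]}\subseteq A^{[m]}$ for $2\le k\le m$. Also $A^{[2]}=A*A=A^{(2)}$, so $A^{[m]}\subseteq A^{(2)}$ for all $m\ge 2$. Then I would induct on $m_{2}$. If $m_{2}\le 2$ then $A^{[2]}=A*A=A^{(2)}=1$. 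If $m_{2}\ge 3$, put $I:=A^{(m_{2}-1)}$: this is an ideal, it is a \emph{trivial} skew brace because $I*I\subseteq I*A=A^{(m_{2})}=1$, and in particular $I*A=1$; moreover $A/I$ is a skew brace with $(A/I)^{m_{1}}=1$ and $(A/I)^{(m_{2}-1)}=1$ (the left and right series of $A$ map onto those of $A/I$), so by the inductive hypothesis $A/I$ is strongly nilpotent, and since the Smoktunowicz series of $A$ maps into that of $A/I$ this yields $A^{[N]}\subseteq I$ for some $N$.

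It then remains to upgrade ``$A^{[N]}\subseteq I$'' to ``$A^{[M]}=1$ for some $M$'', and this is where left nilpotency is spent. I would prove, by induction on $r\ge 2$, that there is $M_{r}$ with $A^{[m]}\subseteq A^{r}$ for all $m\ge M_{r}$. For $r=2$ this is the second fact above, with $M_{2}=2$. For the step, take $M_{r+1}=N+M_{r}$, let $m\ge M_{r+1}$, and consider a defining summand $A^{[k]}*A^{[m-k]}$ of $A^{[m]}$ with $1\le k\le m-1$. If $k\ge N$ then $A^{[k]}\subseteq A^{[N]}\subseteq I$, so $A^{[k]}*A^{[m-k]}\subseteq I*A=1$. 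If $k<N$ then $m-k>m-N\ge M_{r}$, so $A^{[m-k]}\subseteq A^{r}$, and since $A^{[k]}\subseteq A$ we get $A^{[k]}*A^{[m-k]}\subseteq A*A^{r}=A^{r+1}$. In both cases $A^{[m]}\subseteq A^{r+1}$; iterating up to $r=m_{1}$ gives $A^{[M_{m_{1}}]}\subseteq A^{m_{1}}=1$, which completes the proof.

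The step I expect to require the most care is this last one, and the key realization is a negative one: one should \emph{not} try to push the Smoktunowicz series of $A$ into that of a proper subquotient such as $A^{2}$ — that already fails for almost trivial skew braces — but rather push it into the \emph{left} series of $A$ itself, bounding the two types of summand $A^{[k]}*A^{[m-k]}$ separately. The large-$k$ summands vanish because $A^{[k]}$ already lies in the trivial tail $I$, which annihilates $A$ on the right (this consumes right nilpotency), whereas the small-$k$ summands have a deep right-hand factor $A^{[m-k]}\subseteq A^{r}$ and therefore climb one notch along the left series (this consumes, and eventually exhausts, left nilpotency). This splitting also makes transparent why neither hypothesis in $(b)$ can be omitted.
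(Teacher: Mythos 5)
Your proof is correct. Note that the paper itself does not prove this theorem: it is quoted from \cite[Theorem 1.3]{Smok} and \cite[Theorem 2.30]{nilpotent}, so there is no in-paper argument to compare against, and what you have written is a valid self-contained proof. The forward direction is the standard one: $A^{n}\subseteq A^{[n]}$ and $A^{(n)}\subseteq A^{[n]}$ via the extreme summands $i=1$ and $i=n$. For the converse, your reduction modulo $I=A^{(m_2-1)}$ (inducting on $m_2$), followed by the two-case estimate on the summands $A^{[k]}*A^{[m-k]}$ -- large $k$ annihilated because $A^{[k]}\subseteq I$ and $I*A=A^{(m_2)}=1$, small $k$ pushed one step further down the left series because the right-hand factor is deep -- correctly interleaves the two hypotheses, and the bookkeeping ($M_{r+1}=N+M_r$, terminating at $r=m_1$) checks out. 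The only ingredients you import without proof are that the terms $A^{[m]}$ are left ideals (needed for $A*A^{[m]}\subseteq A^{[m]}$, hence for the series being descending) and that all three series are carried onto the corresponding series of a quotient by a surjective skew brace homomorphism; both are standard, and the former is explicitly cited in the paper, so this is acceptable.
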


Finally, there is another analog of the lower central series for skew braces that was defined in \cite{central}. It seems to be the better analog for various reasons, and we shall say more about it in Section \ref{sec:central}.

\section{Analogs of the upper central series}\label{sec:upper}

For a group $G$, the \textit{upper central series} is defined by
\[ \zeta_0(G) = 1,\quad Z(G/\zeta_{n}(G)) = \zeta_{n+1}(G)/\zeta_n(G)\]
in terms of the center operator $Z(\,\ )$. Note that the center
\[ Z(G) = \{x\in G \mid [x,g]=1 \mbox{ for all }g\in G\} \]
of $G$ may be defined in terms of the commutator operator $[\,\ ,\,\ ]$. Using the star product, this may be naturally extended to skew braces. But not only that $x*a=1$ and $a*x=1$ are not equivalent for $a,x\in A$ in general, there are also other technicalities. Here, we introduce two natural analogs of the center that are frequently used in the literature, as follows.

The \textit{socle} of $A$ is defined by
\begin{align}\notag
\mathrm{Soc}(A) &= \{x\in A\mid x*a = 1\mbox{ for all }a\in A\}\cap Z(A,\cdot)\\\label{soc def}
& = \ker(\lambda)\cap Z(A,\cdot),
\end{align} 
and the \textit{annihilator} of $A$ is defined by
\begin{align}\notag
    \mathrm{Ann}(A) &= \{x\in A\mid x*a = a*x = 1\mbox{ for all }a\in A\}\cap Z(A,\cdot)\\\label{Ann def}
    & = \mathrm{Soc}(A)\cap Z(A,\circ)\\\notag
    & = \ker(\lambda)\cap Z(A,\cdot)\cap Z(A,\circ).
\end{align} These two analogs of the center, which first appeared in  \cite[Definition 2.4]{GV} and \cite[Definition 7]{ann}, respectively, are better than the others in some sense because both of them are ideals of $A$, so in particular we can form quotients. This is well-known, but we shall give a proof for the sake of completeness.

\begin{proposition}\label{prop:Soc} The socle $\mathrm{Soc}(A)$ is an ideal of $A$.
\end{proposition}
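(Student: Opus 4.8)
The goal is to show $\mathrm{Soc}(A) = \ker(\lambda) \cap Z(A,\cdot)$ is an ideal of $A$, i.e. it is a left ideal, normal in $(A,\cdot)$, and normal in $(A,\circ)$. The plan is to verify these three properties in turn, using the identities collected in Lemma~\ref{lem:identities} together with the fact that $\lambda$ is a group homomorphism $(A,\circ) \to \Aut(A,\cdot)$.

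First I would check that $S := \mathrm{Soc}(A)$ is a subgroup of $(A,\cdot)$: it is the intersection of $\ker(\lambda)$ (a subgroup of $(A,\cdot)$, since $\lambda_{a\cdot b} = \lambda_{a}\lambda_{b}$ follows from the brace relation, or alternatively one argues via $\lambda_{a\circ b}=\lambda_a\lambda_b$ and the relation between $\cdot$ and $\circ$) with the center $Z(A,\cdot)$, which is obviously a subgroup of $(A,\cdot)$. Next, for the left-ideal property I must show $\lambda_a(S) \subseteq S$ for all $a\in A$. Take $x\in S$; since $\lambda_a$ is an automorphism of $(A,\cdot)$ and $x\in Z(A,\cdot)$, we get $\lambda_a(x)\in Z(A,\cdot)$ automatically. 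It remains to see $\lambda_a(x)\in\ker(\lambda)$, i.e. $\lambda_a(x)*b=1$ for all $b\in A$. Using Lemma~\ref{lem:identities}(3), $\lambda_a(x*y) = (a\circ x\circ\overline a)*\lambda_a(y)$; the idea is to exploit that $x\in\ker(\lambda)$ means $x*y=1$ for all $y$, hence the left-hand side is $\lambda_a(1)=1$, so $(a\circ x\circ\overline a)*\lambda_a(y)=1$ for every $y$, and as $y$ ranges over $A$ so does $\lambda_a(y)$, giving $a\circ x\circ\overline a\in\ker(\lambda)$. This is not quite $\lambda_a(x)$, so I would instead observe $\lambda_a = \lambda_{a\circ \overline a \circ a}$... more cleanly: since $\lambda$ is a homomorphism and $x\in\ker\lambda$, we have $\lambda_{a\circ x} = \lambda_a \lambda_x = \lambda_a$, and I want to relate $\lambda_a(x)$ to elements known to be in $\ker\lambda$; the cleanest route is to note $a\cdot\lambda_a(x) = a\circ x$, and since $a\circ x\circ\overline a\in\ker\lambda$ as just shown, conjugation/translation arguments in $\ker\lambda$ (which is normal—see below) finish it. Alternatively, and more directly, one can use that $\ker(\lambda)\cap Z(A,\cdot)$ is precisely $\mathrm{Fix}$ under all $\lambda_a$ combined with centrality, and Lemma~\ref{lem:identities}(3) shows $\ker(\lambda)$ is $\lambda$-invariant.

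For normality in $(A,\cdot)$: if $x\in S\subseteq Z(A,\cdot)$ then $a\cdot x\cdot a^{-1}=x\in S$ trivially. For normality in $(A,\circ)$: take $x\in S$ and $a\in A$; I must show $a\circ x\circ\overline a\in S$. By Lemma~\ref{lem:identities}(4), $a\circ x\circ\overline a = a\cdot\lambda_a(x\cdot(x*\overline a))\cdot a^{-1}$; since $x\in\ker(\lambda)$ we have $x*\overline a=1$, so this collapses to $a\circ x\circ\overline a = a\cdot\lambda_a(x)\cdot a^{-1}$. By the left-ideal step, $\lambda_a(x)\in S\subseteq Z(A,\cdot)$, so $a\cdot\lambda_a(x)\cdot a^{-1}=\lambda_a(x)\in S$. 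This closes all three conditions.

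The main obstacle is the left-ideal step—showing $\lambda_a(\ker\lambda)\subseteq\ker\lambda$. The subtlety is that Lemma~\ref{lem:identities}(3) naturally produces $(a\circ x\circ\overline a)*\lambda_a(y)$ rather than $\lambda_a(x)*(\text{something})$, so one cannot immediately conclude; one has to either argue that $a\circ x\circ\overline a\in\ker\lambda$ and separately that $\ker\lambda$ is normal in $(A,\circ)$ (so that $\lambda_a(x) = \overline a\cdot(a\circ x)$-type manipulations stay inside), or invoke the homomorphism property $\lambda_{a\circ x\circ\overline a} = \lambda_a\lambda_x\lambda_{\overline a} = \lambda_a\lambda_{\overline a} = \mathrm{id}$ to get $a\circ x\circ\overline a\in\ker\lambda$ cleanly, and then note that $a\circ(\ker\lambda) = a\cdot(\ker\lambda)$ since $\ker\lambda$ is a left ideal, so $\lambda_a(x) \in a^{-1}\cdot(a\circ x) \subseteq a^{-1}\cdot a\cdot\ker\lambda\cdot(\text{correction})$—here combining with centrality of $x$ in $(A,\cdot)$ makes the bookkeeping trivial. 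Everything else is routine once this invariance is in hand.
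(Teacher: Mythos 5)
Your proposal is, in substance, the paper's own proof: the decisive point is the identity $a\circ x\circ\overline{a}=\lambda_a(x)$ for $x\in\mathrm{Soc}(A)$, obtained from Lemma~\ref{lem:identities}(4) together with $x*\overline{a}=1$ and the centrality of $\lambda_a(x)$, combined with the normality of $\ker(\lambda)$ in $(A,\circ)$; this single identity yields both the left-ideal property and normality in $(A,\circ)$. All of these ingredients appear somewhere in your write-up, so the proof is recoverable. However, as written, the crucial left-ideal step is never actually closed where you present it, and several of the side remarks are wrong or circular.

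Concretely: (i) $\lambda_{a\cdot b}=\lambda_a\lambda_b$ does \emph{not} follow from the brace relation (in an almost trivial skew brace one has $\lambda_{a\cdot b}=\lambda_b\lambda_a$, which differs from $\lambda_a\lambda_b$ when $(A,\cdot)$ is non-abelian); the correct route to ``$\ker(\lambda)$ is a subgroup of $(A,\cdot)$'' is the alternative you mention, namely that $\cdot$ and $\circ$ coincide on $\ker(\lambda)$. (ii) The appeal to ``$a\circ(\ker\lambda)=a\cdot(\ker\lambda)$ since $\ker\lambda$ is a left ideal'' is circular: $\lambda_a(\ker\lambda)\subseteq\ker\lambda$ is essentially what you are trying to establish at that point. (iii) Describing $\ker(\lambda)\cap Z(A,\cdot)$ as a fixed-point set of the maps $\lambda_a$ confuses $\lambda_x=\mathrm{id}$ (i.e.\ $x*a=1$ for all $a$) with $\lambda_a(x)=x$ (i.e.\ $a*x=1$ for all $a$); these are different conditions. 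The repair is simply a reordering of material you already have: first note $\lambda_a(x)\in Z(A,\cdot)$ (automatic, since $\lambda_a\in\Aut(A,\cdot)$), then run the Lemma~\ref{lem:identities}(4) computation that you placed in the normality-in-$(A,\circ)$ paragraph --- it uses only $x*\overline{a}=1$ and the centrality of $\lambda_a(x)$, not the left-ideal conclusion --- to get $a\circ x\circ\overline{a}=\lambda_a(x)$, and finally use $\lambda_{a\circ x\circ\overline{a}}=\lambda_a\lambda_x\lambda_{\overline{a}}=\mathrm{id}$ to conclude $\lambda_a(x)\in\ker(\lambda)$. With that reorganization your argument coincides with the one in the paper.
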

\begin{proof} Since $\lambda$ is a homomorphism on $(A,\circ)$, its kernel $\ker(\lambda)$ is a subgroup of $(A,\circ)$. But $\cdot$ and $\circ$ coincide on $\ker(\lambda)$. It then follows that $\ker(\lambda)$, and in particular $\mathrm{Soc}(A)$ in view of (\ref{soc def}), is a subgroup of $(A,\cdot)$.   
\begin{enumerate}[(1)]
\item $\mathrm{Soc}(A)$ is a left ideal of $A$: For any $a\in A$ and $x\in \mathrm{Soc}(A)$, that
\[ \lambda_a(x)\in Z(A,\cdot)\]
is clear because $\lambda_a\in\mathrm{Aut}(A,\cdot)$. Note that $x*\overline{a}=1$ because $x\in \ker(\lambda)$. By Lemma~\ref{lem:identities}(4), we then see that
\begin{align}\notag
a \circ x \circ \overline{a} & = a\lambda_a(x(x*\overline{a}))a^{-1}\\\notag
& = a \lambda_a(x)a^{-1}\\\label{socle equation}
& = \lambda_a(x).
\end{align}
But $\ker(\lambda)$ is a normal subgroup of $(A,\circ)$, so we deduce that
\[
\lambda_a(x)\in\ker(\lambda)\]
also holds. Thus, we have shown that $\lambda_a(x) \in \mathrm{Soc}(A)$, so $\mathrm{Soc}(A)$ is a left ideal of $A$.
\item $\mathrm{Soc}(A)$ is normal in $(A,\cdot)$: This is because $\mathrm{Soc}(A)\subseteq Z(A,\cdot)$.
\item $\mathrm{Soc}(A)$ is normal in $(A,\circ)$: This follows from (\ref{socle equation}).
\end{enumerate}
Hence, indeed $\mathrm{Soc}(A)$ is an ideal of $A$.
\end{proof}

\begin{proposition}\label{prop:Ann} The annihilator $\mathrm{Ann}(A)$ is an ideal of $A$.
\end{proposition}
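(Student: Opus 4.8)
The plan is to build on Proposition~\ref{prop:Soc} by exploiting the description $\mathrm{Ann}(A) = \mathrm{Soc}(A)\cap Z(A,\circ)$ recorded in (\ref{Ann def}), so that essentially all of the analytic content is already in place and what remains is bookkeeping.

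First I would verify that $\mathrm{Ann}(A)$ is a subgroup of $(A,\cdot)$. Since $\mathrm{Soc}(A)\subseteq\ker(\lambda)$ and the operations $\cdot$ and $\circ$ coincide on $\ker(\lambda)$, the subgroup $(\mathrm{Soc}(A),\cdot)$ of $(A,\cdot)$ equals $(\mathrm{Soc}(A),\circ)$, a subgroup of $(A,\circ)$. Intersecting with the subgroup $Z(A,\circ)$ of $(A,\circ)$ shows that $\mathrm{Ann}(A)$ is a subgroup of $(\mathrm{Soc}(A),\circ) = (\mathrm{Soc}(A),\cdot)$, hence of both $(A,\cdot)$ and $(A,\circ)$.

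Next, for the left ideal property, I would show that $\lambda_a$ fixes $\mathrm{Ann}(A)$ pointwise for every $a\in A$. Indeed, take $x\in\mathrm{Ann}(A)\subseteq\mathrm{Soc}(A)$. The computation (\ref{socle equation}) from the proof of Proposition~\ref{prop:Soc} gives $\lambda_a(x) = a\circ x\circ\overline{a}$, and since $x\in Z(A,\circ)$ the right-hand side is just $x$. Therefore $\lambda_a(\mathrm{Ann}(A)) = \mathrm{Ann}(A)$; in particular $\lambda_a(\mathrm{Ann}(A))\subseteq\mathrm{Ann}(A)$, so $\mathrm{Ann}(A)$ is a left ideal of $A$. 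Normality of $\mathrm{Ann}(A)$ in $(A,\cdot)$ and in $(A,\circ)$ is then immediate from the containments $\mathrm{Ann}(A)\subseteq Z(A,\cdot)$ and $\mathrm{Ann}(A)\subseteq Z(A,\circ)$ built into (\ref{Ann def}). Putting the three points together yields that $\mathrm{Ann}(A)$ is an ideal of $A$.

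I do not expect any genuine obstacle: once Proposition~\ref{prop:Soc} is granted, the only mildly delicate point is keeping track of which ambient group a given subset is being viewed as a subgroup of, and this is resolved by the single observation that $\cdot$ and $\circ$ agree on $\ker(\lambda)$. Everything else is a direct transfer of the corresponding facts for $\mathrm{Soc}(A)$ together with the extra condition $x\in Z(A,\circ)$.
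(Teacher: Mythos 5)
Your proof is correct and follows essentially the same route as the paper: subgroup of both groups via the coincidence of $\cdot$ and $\circ$ on $\ker(\lambda)$, the left ideal property by showing $\lambda_a$ fixes $\mathrm{Ann}(A)$ pointwise, and normality from the containments in $Z(A,\cdot)$ and $Z(A,\circ)$. The only cosmetic difference is that the paper gets $\lambda_a(x)=x$ directly from $a*x=1$ via $\lambda_a(x)=(a*x)\cdot x$, whereas you route through (\ref{socle equation}) and centrality in $(A,\circ)$; both are fine.
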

\begin{proof} We know that $\mathrm{Soc}(A)$ is a subgroup of $(A,\circ)$ by Proposition \ref{prop:Soc}, so clearly $\mathrm{Ann}(A)$ is also a subgroup of $(A,\circ)$ by (\ref{Ann def}). Since $\cdot$ and $\circ$ coincide on $\ker(\lambda)$, it follows that $\mathrm{Ann}(A)$ is a subgroup of $(A,\cdot)$.
\begin{enumerate}[(1)]
\item $\mathrm{Ann}(A)$ is a left ideal of $A$: For any $a\in A$ and $x\in \mathrm{Ann}(A)$, we have
\[ \lambda_a(x) = (a*x)\cdot x = x,\]
so trivially $\mathrm{Ann}(A)$ is a left ideal of $A$.
\item $\mathrm{Ann}(A)$ is normal in $(A,\cdot)$: This is because $\mathrm{Ann}(A)\subseteq Z(A,\cdot)$.
\item $\mathrm{Ann}(A)$ is normal in $(A,\circ)$: This is because $\mathrm{Ann}(A)\subseteq Z(A,\circ)$.
\end{enumerate}
Hence, indeed $\mathrm{Ann}(A)$ is an ideal of $A$.
\end{proof}

The \textit{socle series} of $A$ is defined by
\begin{equation}\label{socle series} \mathrm{Soc}_0(A) = 1, \quad \mathrm{Soc}(A/\mathrm{Soc}_n(A)) = \mathrm{Soc}_{n+1}(A)/\mathrm{Soc}_n(A),
\end{equation}
and the \textit{annihilator series} of $A$ is defined by
\begin{equation}\label{ann series}
 \mathrm{Ann}_0(A) = 1, \quad \mathrm{Ann}(A/\mathrm{Ann}_n(A)) = \mathrm{Ann}_{n+1}(A)/\mathrm{Ann}_n(A).
\end{equation}
For each $n\geq0 $, it follows from Propositions \ref{prop:Soc}, \ref{prop:Ann}, and the lattice theorem in groups that $\mathrm{Soc}_{n+1}(A)$ and $\mathrm{Ann}_{n+1}(A)$ are uniquely determined ideals of $A$. Thus, we can indeed recursively form the skew brace quotients
\[ A/ \mathrm{Soc}_n(A),\quad A/\mathrm{Ann}_n(A).\]
The socle series was introduced by Rump \cite{Rump0} for braces and was extended to all skew braces in \cite{nilpotent}, while the annihilator series first appeared in \cite{central}. In the case that $A=(A,\cdot,\cdot)$ is trivial or $A = (A,\cdot,\cdot^{\mbox{\tiny op}})$ is almost trivial, the socle and annihilator series are the upper central series of $(A,\cdot)$. 

\begin{remark}
    The authors of \cite{SV} defined the socle series of $A$ by
\begin{equation}\label{socle series'} A_1 = A,\quad A_{n+1} = A_n/\mathrm{Soc}(A_n)\end{equation}
    instead, and the \texttt{SocleSeries} command in \texttt{GAP} \cite{GAP} returns this series \eqref{socle series'} rather than the series \eqref{socle series}. But observe that by induction, we have a skew brace isomorphism, i.e. a map that is an automorphism with respect to both of the operations $\cdot$ and $\circ$, between
    \[ A_{n} \simeq A/\mathrm{Soc}_{n-1}(A),\]
    for all $n\geq 1$. Indeed, for $n=1$, this is trivial. Assuming that we have such an isomorphism for $n$, we see that
    \begin{align*}
        A_{n+1} & = A_n/\mathrm{Soc}(A_n)\\
        & \simeq (A/\mathrm{Soc}_{n-1}(A))/\mathrm{Soc}(A/\mathrm{Soc}_{n-1}(A))\\
        & = (A/\mathrm{Soc}_{n-1}(A))/(\mathrm{Soc}_n(A)/\mathrm{Soc}_{n-1}(A))\\
        & \simeq A/\mathrm{Soc}_{n}(A).
    \end{align*} 
Hence, the terms in the series (\ref{socle series'}) are nothing but quotients of those in the socle series of our definition, and there is no fundamental difference between the two series. 
\end{remark}

Unlike the left and right series, the socle and annihilator series are comparable. Indeed, a simple induction on $n$ shows that
\begin{equation}\label{AnnSoc} \mathrm{Ann}_n(A) \subseteq \mathrm{Soc}_n(A)\end{equation}
for all $n\geq 0$. But the other inclusion need not hold in general.

\begin{example}\label{ex:pq'} Let $A = (C_p\times C_q,\cdot,\circ)$ be the  brace in Example \ref{ex:pq}(i). From (\ref{soc def}), we see that $\mathrm{Soc}(A)= \ker(\lambda)$ because $(A,\cdot)$ is abelian. Moreover, from (\ref{pq*}), it is easy to see that $\mathrm{Soc}(A) = C_p\times \{1\}$ and the quotient $A/\mathrm{Soc}(A)$ is a trivial brace. This means that
\[ \mathrm{Soc}(A/\mathrm{Soc}(A)) = Z(A/\mathrm{Soc}(A),\cdot) = A/\mathrm{Soc}(A).\]
It follows that the socle series of $A$ is given by
\[ \mathrm{Soc}_0(A)=1,\quad \mathrm{Soc}_1(A)=C_p\times\{1\},\quad \mathrm{Soc}_n(A)=A\mbox{ for }n\geq 2. \]
On the other hand, note that $\mathrm{Ann}(A)=1$ since $(A,\circ)$ has a trivial center. It follows that the annihilator series of $A$ is given by
\[ \mathrm{Ann}_n(A)=1\mbox{ for }n\geq 0.\]
Thus, the socle and annihilator series are different here.
\end{example}

As one can expect from their definitions, the socle and annihilator series of $A$, respectively, are comparable with the upper central series of $(A,\cdot)$ and $(A,\circ)$. More precisely, a simple induction on $n$ yields that
\begin{equation}\label{Soc zeta} \mathrm{Ann}_n(A)\subseteq \mathrm{Soc}_n(A) \subseteq \zeta_n(A,\cdot)\quad\mbox{and}\quad
\mathrm{Ann}_n(A)\subseteq \zeta_n(A,\circ),\end{equation}
for all $n\geq 0$. But in general, the socle series of $A$ and the upper central series of $(A,\circ)$ are not comparable. Indeed, for the brace $A = (C_p\times C_q,\cdot,\circ)$ in Example \ref{ex:pq}(i),  by Example~\ref{ex:pq'} and the fact that $(A,\circ)$ is centerless, we have
\[ \mathrm{Soc}_n(A) = A \quad\mbox{and}\quad \zeta_n(A,\circ) = 1,\]
for all $n\geq 2$. But for the skew brace $A=(C_p\times C_q,\cdot,\circ)$ in Example \ref{ex:pq}(ii), we have
\[ \mathrm{Soc}_n(A) = 1\quad \mbox{and} \quad \zeta_n(A,\circ)=A,\]
for all $n\geq 1$ because $(A,\cdot)$ is centerless and $(A,\circ)$ is abelian.

\section{Comparison of the different types of nilpotency}\label{sec:compare}

For a group $G$, it is called \textit{nilpotent} if its lower central series reaches $1$, or equivalently, if its upper central series reaches $G$. Using the four series from Sections \ref{sec:lower} and \ref{sec:upper}, we can naturally extend the notion of nilpotency to skew braces, as follows.

\begin{definition}\label{def:nilpotent} The skew brace $A$ is said to be
\begin{enumerate}[(1)]
\item \textit{left nilpotent} if $A^n=1$ for some $n$;
\item \textit{right nilpotent} if $A^{(n)}=1$ for some $n$;
\item \textit{socle nilpotent} if $\mathrm{Soc}_n(A)= A$ for some $n$;
\item \textit{annihilator nilpotent} if $\mathrm{Ann}_n(A) =A$ for some $n$.
\end{enumerate}
We note that the term \textit{finite multipermutation level} is also used for (3), such as in \cite{nilpotent} due to its relation with the Yang-Baxter equation, while \textit{centrally nilpotent} is sometimes used for (4), such as in \cite{central, nil}. 
\end{definition}

For a group $G$, its lower central series reaches the identity if and only if its upper central series reaches $G$. However, for a skew brace, the situation is totally different. Simply take $A = (A,\cdot,\cdot)$ to be a trivial skew brace with non-trivial centerless underlying group $(A,\cdot)$, in which case $A$ is clearly left and right nilpotent, but is neither socle nilpotent nor annihilator nilpotent. We remark that left and right nilpotency are non-equivalent by Example \ref{ex:pq}, while socle and annihilator nilpotency are also non-equivalent by Example \ref{ex:pq'}.

Nevertheless, there are some relations among the four different types of nilpotency. For example, from (\ref{AnnSoc}), it is clear that annihilator nilpotent implies socle nilpotent. Moreover, it was recently proven in \cite{BKP} that a left nilpotent skew brace $A$ with nilpotent additive group $(A,\cdot)$ and $A^3=1$ is necessarily right nilpotent. But as we can see from Example~\ref{ex:pq}, in general, there is no implication between left nilpotent and right nilpotent. Here, we shall survey a couple of known results about the various types of nilpotency.

Theorem \ref{thm:nilpotency1} below is from \cite[Theorem 2.20]{nilpotent}. We give a slightly different and completely self-contained proof here.

\begin{theorem}\label{thm:nilpotency1} The following are equivalent:
\begin{enumerate}[$(a)$]
\item $A$ is right nilpotent and $(A,\cdot)$ is nilpotent.
\item $A$ is socle nilpotent.
\end{enumerate}
\end{theorem}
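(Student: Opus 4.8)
The plan is to prove the two implications separately, using the structural facts already established about the socle and the right series.

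\medskip

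\emph{From $(b)$ to $(a)$.} Suppose $\mathrm{Soc}_n(A) = A$. By \eqref{Soc zeta} we have $\mathrm{Soc}_n(A) \subseteq \zeta_n(A,\cdot)$, so $\zeta_n(A,\cdot) = A$ and hence $(A,\cdot)$ is nilpotent. For right nilpotency, I would show by induction that $A^{(k+1)} \subseteq \mathrm{Soc}_{n-k}(A)$ for $0 \le k \le n$, which for $k = n$ gives $A^{(n+1)} \subseteq \mathrm{Soc}_0(A) = 1$. The base case $A^{(1)} = A = \mathrm{Soc}_n(A)$ is immediate. For the inductive step, assume $A^{(k)} \subseteq \mathrm{Soc}_{n-k+1}(A)$; then for $x \in A^{(k)}$ and $a \in A$, the image of $x$ in $A/\mathrm{Soc}_{n-k}(A)$ lies in $\mathrm{Soc}(A/\mathrm{Soc}_{n-k}(A))$ by the defining property \eqref{socle series}, so in particular $x * a \equiv 1$ modulo $\mathrm{Soc}_{n-k}(A)$ (using that the socle is contained in $\ker(\lambda)$, i.e.\ $\bar{x} * \bar{a} = 1$ in the quotient). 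Since the elements $x * a$ generate $A^{(k+1)}$ with respect to $\cdot$ and $\mathrm{Soc}_{n-k}(A)$ is a subgroup, this yields $A^{(k+1)} \subseteq \mathrm{Soc}_{n-k}(A)$, completing the induction.

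\medskip

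\emph{From $(a)$ to $(b)$.} This is the direction I expect to be the main obstacle, since it requires combining the nilpotency of $(A,\cdot)$ with right nilpotency of $A$ to build up the socle series, and the socle involves both a $\ker(\lambda)$ condition and a centrality condition in $(A,\cdot)$. The strategy is to induct on the nilpotency class of $(A,\cdot)$ together with the length of the right series, or more cleanly, to find a nonzero ideal $I$ of $A$ with $I \subseteq \mathrm{Soc}(A)$ whenever $A \ne 1$, and then pass to $A/I$: one checks that $A/I$ still has nilpotent additive group (quotients of nilpotent groups are nilpotent) and is still right nilpotent (since $(A/I)^{(n)}$ is the image of $A^{(n)}$), so by induction on $|A|$ — or on $\mathrm{class}(A,\cdot) + (\text{right length})$ to handle the infinite case — the socle series of $A/I$ reaches $A/I$, and pulling back through the correspondence given by \eqref{socle series} shows the socle series of $A$ reaches $A$. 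To produce such an $I$: since $A$ is right nilpotent, let $m$ be largest with $A^{(m)} \ne 1$, so $A^{(m)} * A = A^{(m+1)} = 1$, i.e.\ $A^{(m)} \subseteq \ker(\lambda)$; this $A^{(m)}$ is an ideal by Proposition~\ref{prop:right}. Now $A^{(m)}$ need not lie in $Z(A,\cdot)$, but since $(A,\cdot)$ is nilpotent and $A^{(m)}$ is a nontrivial normal subgroup of $(A,\cdot)$, the intersection $A^{(m)} \cap Z(A,\cdot)$ is nontrivial; I would take $I$ to be this intersection (or an $A$-stable nontrivial sub-skew brace inside it). The delicate point is checking $I$ is a left ideal, i.e.\ $\lambda_a(I) \subseteq I$: since $I \subseteq \ker(\lambda)$, formula \eqref{socle equation} from the proof of Proposition~\ref{prop:Soc} gives $a \circ x \circ \bar a = \lambda_a(x)$ for $x \in I$, and one uses that $A^{(m)}$ is normal in $(A,\circ)$ together with $Z(A,\cdot)$ being $\mathrm{Aut}(A,\cdot)$-stable to conclude $\lambda_a(x) \in A^{(m)} \cap Z(A,\cdot) = I$. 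Then $I$ is an ideal contained in $\mathrm{Soc}(A)$, and the induction goes through.

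\medskip

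The technical care needed is mostly in verifying that the inductive hypotheses are preserved under the quotient and in the left-ideal check for $I$; the arithmetic with the star-product identities in Lemma~\ref{lem:identities} is routine once the induction scheme is set up. If one prefers to avoid a finiteness or class-based induction entirely, an alternative is to prove directly by induction on $k$ that $A^{(m-k+1)} \cap \zeta_k(A,\cdot) \subseteq \mathrm{Soc}_k(A)$ for all $k$, which at $k$ equal to the class of $(A,\cdot)$ (and $m$ the right length) forces $\mathrm{Soc}(A)$ to exhaust $A$; this is the more self-contained route and is likely what the paper's proof does.
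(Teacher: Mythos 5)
Your $(b)\Rightarrow(a)$ direction is correct and is essentially the paper's argument. The gap is in $(a)\Rightarrow(b)$. Your construction of the nontrivial ideal $I=A^{(m)}\cap Z(A,\cdot)\subseteq\mathrm{Soc}(A)$ is fine (including the left-ideal check via \eqref{socle equation} and the fact that a nontrivial normal subgroup of a nilpotent group meets the center), but the induction you hang on it is not well-founded once $A$ is infinite, and the theorem carries no finiteness hypothesis. Passing to $A/I$ need not decrease either of your proposed measures: since $I$ may be a proper subgroup of $A^{(m)}$, the image of $A^{(m)}$ in $A/I$ can still be nontrivial, so the right-series length need not drop; and quotienting a nilpotent group by a proper central subgroup need not lower its class (e.g.\ $Q_8\times C_2$ modulo the second factor still has class $2$). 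So ``the induction goes through'' is precisely the unproved point. Your fallback, proving $A^{(m-k+1)}\cap\zeta_k(A,\cdot)\subseteq\mathrm{Soc}_k(A)$ by induction on $k$, also does not close: for $x\in A^{(m-k)}\cap\zeta_{k+1}(A,\cdot)$ and $a\in A$ you would need both $x*a$ and $[x,a]$ to land in $\mathrm{Soc}_k(A)$, but the inductive hypothesis only covers $A^{(m-k+1)}\cap\zeta_k(A,\cdot)$; you know $x*a\in A^{(m-k+1)}$ but have no control over its position in the upper central series of $(A,\cdot)$, and $[x,a]$ lies in $A^{(m-k)}\cap\zeta_k(A,\cdot)$, which is the wrong term of the right series.

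The paper repairs exactly this: it inducts on the length of the right series by quotienting by the \emph{entire} top term $A^{(n-1)}$ (not its intersection with the center), which does strictly shorten the right series, and then lifts the socle series of the quotient. The lifting rests on the single-index inclusion $A^{(n-1)}\cap\zeta_\ell(A,\cdot)\subseteq\mathrm{Soc}_\ell(A)$, whose inductive step works precisely because $x*a=1$ identically for $x\in A^{(n-1)}$ (as $A^{(n)}=1$) and $[x,a]$ stays inside $A^{(n-1)}$; your observation that $A^{(m)}\cap Z(A,\cdot)\subseteq\mathrm{Soc}(A)$ is the $\ell=1$ case of this. Taking $\ell$ equal to the class of $(A,\cdot)$ gives $A^{(n-1)}\subseteq\mathrm{Soc}_c(A)$, and the socle series of $A/A^{(n-1)}$ then pulls back on top of it. So your ingredients are the right ones, but the induction scheme must be reorganized around the full ideal $A^{(n-1)}$ for the argument to terminate without finiteness.
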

\begin{proof}
Assume that (a) holds. We have $A^{(n)}=1$ for some $n$ since $A$ is right nilpotent. For $n=1$, the claim is clear. For $n\geq 2$, we consider the quotient 
\[Q := A/A^{(n-1)},\]
which is possible by Proposition \ref{prop:right}. Since
\[ Q^{(n-1)} = A^{(n-1)}/A^{(n-1)} =1\]
and $(Q,\cdot)$ is certainly nilpotent, by induction, we can assume that $Q$ is socle nilpotent. This means that
\[ 1 = \mathrm{Soc}_0(Q) \subseteq \mathrm{Soc}_1(Q) \subseteq \cdots \subseteq \mathrm{Soc}_{m-1}(Q) \subseteq \mathrm{Soc}_m(Q) = Q , \]
for some $m$. This chain of ideals of $Q$ lifts to a chain of ideals
\[ A^{(n-1)} = I_0 \subseteq I_1 \subseteq \cdots \subseteq I_{m-1} \subseteq I_m = A\]
of $A$, where for each $0\leq j\leq m$, we put
\[\mathrm{Soc}_j(Q) = I_j/A^{(n-1)}.\]
It is not hard to see that for each $0\leq j\leq m-1$, we have
\begin{equation}\label{Soc I}
\mathrm{Soc}(A/I_j) = I_{j+1}/I_j.
\end{equation}
In what follows, let $[\,\ ,\,\ ]$ denote the commutator in the group $(A,\cdot)$.
\begin{enumerate}[(1)]
\item We first use induction to show that 
\[ A^{(n-1)} \cap \zeta_\ell (A,\cdot) \subseteq \mathrm{Soc}_\ell(A) ,\]
for all $\ell\geq 0$. The case $\ell=0$ is trivial. Suppose that the inclusion holds for $\ell$, and let $x\in A^{(n-1)}\cap \zeta_{\ell+1}(A,\cdot)$. For any $a\in A$, we have 
\[ x*a=1 \in \mathrm{Soc}_\ell(A) \]
because $A^{(n)}=1$. Recall that $A^{(n-1)}$ is normal in $(A,\cdot)$ by Proposition \ref{prop:right}. Together with the definition of $\zeta_{\ell+1}(A,\cdot)$, we see that
\[ [x,a]\in A^{(n-1)}\cap \zeta_\ell(A,\cdot)\subseteq \mathrm{Soc}_\ell(A).\]
It follows that $x\in \mathrm{Soc}_{\ell+1}(A)$, as desired.
\end{enumerate}
Now, since $(A,\cdot)$ is nilpotent, we have $\zeta_c(A,\cdot)=A$ for some $c$.
\begin{enumerate}[(1)]\setcounter{enumi}{+1}
\item We next use induction to show that
\[I_j \subseteq \mathrm{Soc}_{c+j}(A),\]
for all $0\leq j\leq m$. For $j=0$, this is (1) in the case $\ell = c$. Suppose that the inclusion holds for $j$, and let $x\in I_{j+1}$. For any $a\in A$, cleary
\[ x*a,\, [x,a] \in I_{j} \subseteq 
 \mathrm{Soc}_{c+j}(A)\]
by (\ref{Soc I}), so it follows that $x\in \mathrm{Soc}_{c+j+1}(A)$, as desired.
\end{enumerate}
By taking $j=m$ in (2), we get that $\mathrm{Soc}_{c+m}(A)=A$, and this shows that $A$ is socle nilpotent, yielding (b).

Now, assume that (b) holds. We have $\mathrm{Soc}_n(A) = A$ for some $n$ from the hypothesis. Note that $(A,\cdot)$ is nilpotent by (\ref{Soc zeta}). We also have
\[ A^{(j+1)} \subseteq \mathrm{Soc}_{n-j}(A) ,\]
for all $0\leq j\leq n$. Indeed, the case $j=0$ is clear. Suppose that the inclusion holds for $j$. This implies that for all $a\in A$ and $x\in A^{(j+1)}$, we have
\[ x\in \mathrm{Soc}_{n-j}(A),\mbox{ and so } x*a \in \mathrm{Soc}_{n-(j+1)}(A).\]
Since these elements $x*a$ generate $A^{(j+2)}$ with respect to $\cdot$, it follows that the inclusion also holds for $j+1$. Taking $j=n$, we see that $A^{(n+1)}=1$, so indeed $A$ is right nilpotent, yielding (a).
\end{proof}

Theorem \ref{thm:nilpotency2} below is a combination of \cite[Proposition 2.12 \& Corollary 2.15]{nilpotent'} and \cite[Theorem 2.7]{central}. A similar statement, but under the assumption that $A$ is finite, appeared in \cite[Corollary 2.11]{central}. Finiteness is imposed there because part of its proof uses the result \cite[Theorem 4.8]{nilpotent}, which is stronger than what is really needed. Using the argument in \cite[Corollary 2.15]{nilpotent'} instead, one sees that finiteness can be dropped. Again, we shall give a self-contained proof here.

\begin{theorem}\label{thm:nilpotency2} The following are equivalent.
\begin{enumerate}[$(a)$]
\item $A$ is both left and right nilpotent, and $(A,\cdot)$ is nilpotent.
\item $A$ is right nilpotent, and both $(A,\cdot)$ and $(A,\circ)$ are nilpotent.
\item $A$ is annihilator nilpotent.
\end{enumerate}
\end{theorem}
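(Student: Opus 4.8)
The plan is to prove the chain of implications $(a) \Rightarrow (b) \Rightarrow (c) \Rightarrow (a)$, using Theorem~\ref{thm:nilpotency1} to handle the interaction between right nilpotency and the additive group, and then reducing everything to a statement about the multiplicative group.

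First I would show $(a) \Rightarrow (b)$. This is the step where something genuine has to be said: assuming $A$ is left and right nilpotent with $(A,\cdot)$ nilpotent, I must deduce that $(A,\circ)$ is nilpotent. The idea is to exploit the identity $x\circ y = x\cdot\lambda_x(y)$ together with left nilpotency $A^{m}=1$. Left nilpotency says iterated star products $a_1 * (a_2 * (\cdots * a_m))$ vanish, which should let one control how far $\lambda_x$ deviates from the identity as $x$ ranges over deeper terms of a suitable series. Concretely, I would argue that the terms of the lower central series of $(A,\circ)$ are squeezed between terms of the lower central series of $(A,\cdot)$ and terms of the left series: a commutator in $(A,\circ)$ can be rewritten, via the brace relation and the fact that $\lambda$ is a homomorphism, as a product of an ordinary $(A,\cdot)$-commutator and star products of the arguments. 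Since both $\gamma_n(A,\cdot)$ and $A^n$ eventually reach $1$, so does $\gamma_n(A,\circ)$. This is the main obstacle: making the bookkeeping precise so that an induction on $n$ actually closes, since each step introduces both a commutator term and a star term and one has to verify both land in the previous term of the $(A,\circ)$-series.

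Next, $(b) \Rightarrow (c)$: assume $A$ is right nilpotent with $(A,\cdot)$ and $(A,\circ)$ both nilpotent. By Theorem~\ref{thm:nilpotency1}, $A$ is socle nilpotent, so $\mathrm{Soc}_n(A)=A$ for some $n$. Now I would upgrade the socle series to the annihilator series by an argument analogous to part~(1)--(2) of the proof of Theorem~\ref{thm:nilpotency1}. Since $\mathrm{Ann}(A) = \mathrm{Soc}(A)\cap Z(A,\circ)$ by \eqref{Ann def}, and $\zeta_c(A,\circ)=A$ for some $c$ by hypothesis, one shows by induction that $\zeta_\ell(A,\circ)\cap\mathrm{Soc}_k(A)\subseteq \mathrm{Ann}_{\ell+k}(A)$ or something in this spirit, using that an element of $\mathrm{Soc}_k(A)$ that is also central modulo $\mathrm{Ann}_j(A)$ in $(A,\circ)$ lies in $\mathrm{Ann}_{j+1}(A)$. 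Feeding in $k=n$ and $\ell=c$ gives $\mathrm{Ann}_{n+c}(A)=A$, i.e. annihilator nilpotency.

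Finally, $(c) \Rightarrow (a)$: this is the easy direction. If $\mathrm{Ann}_n(A)=A$, then by \eqref{Soc zeta} we immediately get $\zeta_n(A,\cdot)=A$ and $\zeta_n(A,\circ)=A$, so both underlying groups are nilpotent. For right (and left) nilpotency, I would run the same descending argument as at the end of the proof of Theorem~\ref{thm:nilpotency1}: show by induction on $j$ that $A^{(j+1)}\subseteq \mathrm{Ann}_{n-j}(A)$ and $A^{j+1}\subseteq\mathrm{Ann}_{n-j}(A)$, using that for $x\in\mathrm{Ann}_{k}(A)$ and any $a\in A$ both $x*a$ and $a*x$ lie in $\mathrm{Ann}_{k-1}(A)$ (this is exactly the defining property of $\mathrm{Ann}$ modulo $\mathrm{Ann}_{k-1}$). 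Taking $j=n$ yields $A^{(n+1)}=1=A^{n+1}$, completing the cycle. Of the three implications, $(a)\Rightarrow(b)$ is by far the most delicate; the other two are variations on arguments already appearing in the proof of Theorem~\ref{thm:nilpotency1}.
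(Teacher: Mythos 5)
Your overall architecture ($(a)\Rightarrow(b)\Rightarrow(c)\Rightarrow(a)$) matches the paper's, and your $(c)\Rightarrow(a)$ is essentially identical to the paper's argument. But your treatment of $(a)\Rightarrow(b)$, which you correctly identify as the crux, has a genuine gap that you flag but do not close. You propose to squeeze $\gamma_n(A,\circ)$ between $\gamma_n(A,\cdot)$ and $A^n$ by rewriting $\circ$-commutators as products of $\cdot$-commutators and star products. That induction does not close as stated: if $x$ lies in the $n$-th term of the hybrid series, rewriting $[x,a]_\circ$ produces star products of both shapes, $a*x$ and $x*a$ (plus terms coming from $\overline{x}$). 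The terms $a*x$ with $x\in A^n$ do descend the left series, but $x*a$ with $x\in A^n$ lands only in $A^n*A$, which is not a term of the left series, and $a*x$ with $x\in\gamma_n(A,\cdot)$ is controlled by neither series; so each step spawns generators about which the induction hypothesis says nothing. (The implication you are after --- left nilpotent plus $(A,\cdot)$ nilpotent implies $(A,\circ)$ nilpotent --- is true by \cite[Corollary 2.15]{nilpotent'}, but its proof needs a more delicate filtration than the one you sketch.) Notably, your sketch never uses right nilpotency, which is available in $(a)$ and which the paper exploits decisively: Theorem \ref{thm:nilpotency1} gives $\mathrm{Soc}_n(A)=A$, one inducts on $n$ via the quotient $A/\mathrm{Soc}(A)$ to get $\gamma_m(A,\circ)\subseteq\mathrm{Soc}(A)$, and then for $x\in\mathrm{Soc}(A)$ the identity \eqref{socle equation} yields exactly $[a,x]_\circ=\lambda_a(x)x^{-1}=a*x$, so from that point the $\circ$-lower central series descends the left series term by term. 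That is the missing idea.

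A smaller remark on $(b)\Rightarrow(c)$: the inclusion you propose, $\zeta_\ell(A,\circ)\cap\mathrm{Soc}_k(A)\subseteq\mathrm{Ann}_{\ell+k}(A)$, also does not induct cleanly for $k\geq 2$, because for $x\in\mathrm{Soc}_k(A)$ the elements $x*a$ and $[x,a]$ land in $\mathrm{Soc}_{k-1}(A)$ with no control over their level in $\zeta_\bullet(A,\circ)$. The paper's fix is to prove only the case $\mathrm{Soc}(A)\cap\zeta_\ell(A,\circ)\subseteq\mathrm{Ann}_\ell(A)$ (where $x*a=[x,a]=1$ automatically) and then climb the chain of ideals obtained by lifting the annihilator series of $A/\mathrm{Soc}(A)$, along which all three of $x*a$, $[x,a]$, $[x,a]_\circ$ drop a level simultaneously. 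Your instinct for this direction is right, but as written it is not yet a proof.
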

\begin{proof}

First, suppose that $A$ is right nilpotent and $(A,\cdot)$ is nilpotent, which are the common hypotheses in (a) and (b). We have $\mathrm{Soc}_n(A) = A$ for some $n$ by Theorem \ref{thm:nilpotency1}. For $n=0$, clearly $(A,\circ)$ is nilpotent and $A$ is annihilator nilpotent, so there is nothing to prove. For $n\geq 1$, we consider the quotient
\[Q:=A/\mathrm{Soc}(A),\]
which is possible by Proposition \ref{prop:Soc}. Clearly $Q$ is right nilpotent, and $(Q,\cdot)$ is nilpotent. It is also easy to see that
\[ \mathrm{Soc}_\ell(Q) = \mathrm{Soc}_{\ell+1}(A)/\mathrm{Soc}(A),\]
for all $\ell\geq 0$, and so in particular $\mathrm{Soc}_{n-1}(Q)=Q$. In what follows, let $[\,\ ,\,\ ]$ and $[\,\ ,\,\ ]_\circ$, respectively, denote the commutator in $(A,\cdot)$ and $(A,\circ)$. Using this set-up, we shall now prove (a)$\Rightarrow$(b) and (b)$\Rightarrow$(c).

\underline{(a)$\Rightarrow$(b)}: Here we assume that $A$ is left nilpotent. Then $Q$ is also left nilpotent, so by induction, we can assume that $(Q,\circ)$ is nilpotent. This means that
\[ \gamma_m(A,\circ) \subseteq \mathrm{Soc}(A),\]
for some $m$. We use induction to show that
\[ \gamma_{m +j}(A,\circ) \subseteq \mathrm{Soc}(A) \cap A^{j+1},\]
for all $j\geq 0$. The case $j=0$ is trivial. Suppose that the inclusion holds for $j$. For any $a\in A$ and $x\in \gamma_{m+j}(A,\circ)$, we then have $x\in \mathrm{Soc}(A)$ and so
\[ [a,x]_\circ\in \mathrm{Soc}(A)\]
because $\mathrm{Soc}(A)$ is normal in $(A,\circ)$ by Proposition \ref{prop:Soc}. On the other hand, recall from (\ref{socle equation}) that $a\circ x \circ\overline{a} = \lambda_a(x)$ and this lies in $\ker(\lambda)$. This yields
 \begin{align*}
 [a,x]_\circ & = \lambda_a(x)\circ \overline{x}=\lambda_a(x)x^{-1}=a*x.
\end{align*}
But we know that $x\in A^{j+1}$, so this implies that
\[ [a,x]_\circ \in A^{j+2}.\]
Since these elements $[a,x]_\circ$ generate $\gamma_{m+j+1}(A,\circ)$ with respect to $\circ$, we see that the inclusion also holds for $j+1$. But $A$ is left nilpotent, so we deduce that $\gamma_{m+j}(A,\circ)=1$ for $j$ large enough, whence $(A,\circ)$ is nilpotent and this proves (b).

\underline{(b)$\Rightarrow$(c)}: Here we assume that $(A,\circ)$ is nilpotent. Then $(Q,\circ)$ is also nilpotent, so by induction, we can assume that $Q$ is annihilator nilpotent. This means that
\[ 1 = \mathrm{Ann}_0(Q) \subseteq \mathrm{Ann}_1(Q) \subseteq \cdots \subseteq \mathrm{Ann}_{m-1}(Q) \subseteq \mathrm{Ann}_m(Q) = Q ,\]
for some $m$. This chain of ideals of $Q$ lifts to a chain of ideals
\[ \mathrm{Soc}(A) = I_0 \subseteq I_1 \subseteq \cdots \subseteq I_{m-1} \subseteq I_m = A\]
of $A$, where for each $0\leq j\leq m$, we put
\[ \mathrm{Ann}_j(Q) = I_j/\mathrm{Soc}(A).\]
It is not hard to see that for each $0\leq j\leq m-1$, we have
\begin{equation}\label{Ann I}
\mathrm{Ann}(A/I_j) = I_{j+1}/I_j.
\end{equation}
We proceed as in the proof of Theorem \ref{thm:nilpotency1}.
\begin{enumerate}[(1)]
\item We first use induction to show that
\[ \mathrm{Soc}(A)\cap \zeta_\ell(A,\circ) \subseteq \mathrm{Ann}_\ell(A),\]
for all $\ell\geq 0$. The case $\ell=0$ is trivial. Suppose that the inclusion holds for $\ell$, and let $x\in \mathrm{Soc}(A)\cap\zeta_{\ell+1}(A,\circ)$. For any $a\in A$, we have
\[ x*a =[x,a] =  1 \in \mathrm{Ann}_{\ell}(A)\]
because $x\in \mathrm{Soc}(A)$. Recall that $\mathrm{Soc}(A)$ is normal in $(A,\circ)$ by Proposition \ref{prop:Soc}. Together with the definition of $\zeta_{\ell+1}(A,\circ)$, we see that 
\[ [x,a]_\circ \in \mathrm{Soc}(A) \cap \zeta_{\ell}(A,\circ)\subseteq\mathrm{Ann}_\ell(A).\]
It follows that $x\in \mathrm{Ann}_{\ell+1}(A)$, as desired.
\end{enumerate}
Now, since $(A,\circ)$ is nilpotent, we have $\zeta_c(A,\circ) = A$ for some $c$.
\begin{enumerate}[(1)]\setcounter{enumi}{+1}
\item We next use induction to show that
\[ I_j \subseteq \mathrm{Ann}_{c+j}(A) ,\]
for all $0\leq j\leq m$. For $j=0$, this is (1) in the case $\ell=c$. Suppose that the inclusion holds for $j$, and let $x\in I_{j+1}$. For any $a\in A$, we have
\[x*a, \, [x,a],\, [x,a]_\circ \in I_j \subseteq \mathrm{Ann}_{c+j}(A)\]
by (\ref{Ann I}), which means that $x\in \mathrm{Ann}_{c+j+1}(A)$, as desired.
\end{enumerate}
By taking $j=m$ in (2), we obtain $\mathrm{Ann}_{c+m}(A)=A$, and this shows that $A$ is annihilator nilpotent. Thus, indeed (c) holds.


\underline{(c)$\Rightarrow$(a)}: Finally, we assume that $A$ is annihilator nilpotent, that is $\mathrm{Ann}_n(A)=A$ for some $n$. Note that $(A,\cdot)$ is nilpotent by (\ref{Soc zeta}). We also have
\[ A^{j+1}\cup A^{(j+1)} \subseteq \mathrm{Ann}_{n-j}(A) ,\]
for all $0\leq j\leq n$. Indeed, for $j=0$, this is clear. Suppose that the inclusion holds for $j$. Then for all $a\in A$ and $x\in A^{j+1},\, y\in A^{(j+1)}$, we have
\[ x,y\in \mathrm{Ann}_{n-j}(A),\mbox{ and so } a * x, \, y * a \in \mathrm{Ann}_{n-(j+1)}(A).\]
Since these elements $a*x$ and $y*a$, respectively, generate $A^{j+2}$ and $A^{(j+2)}$ with respect to $\cdot$, the inclusion holds for $j+1$ as well. Taking $j=n$, we see that $A^{n+1} = A^{(n+1)}=1$, so then $A$ is both left and right nilpotent. 
\end{proof}

\section{Relationship among the terms in the series}\label{sec:relation}

For a group $G$, the terms in its lower and upper central series satisfy the inclusion in the following proposition. 

\begin{proposition}\label{prop:group} For a group $G$, we always have
\[ [\zeta_n(G),\gamma_{n-k}(G)] \subseteq \zeta_{k}(G),\]
for all $n\geq 1$ and $0\leq k \leq n-1$. 
\end{proposition}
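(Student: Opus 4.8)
The plan is to deduce the proposition from the \emph{three subgroups lemma}, in the form: if $X,Y,Z$ are subgroups of $G$ and $N$ is a normal subgroup of $G$ with $[[Y,Z],X]\subseteq N$ and $[[Z,X],Y]\subseteq N$, then $[[X,Y],Z]\subseteq N$. The target $N$ will always be a term $\zeta_j(G)$ of the upper central series, which is legitimate since each $\zeta_j(G)$ is characteristic, hence normal, in $G$. It is convenient to prove the slightly more general statement
\[ [\zeta_n(G),\gamma_m(G)]\subseteq \zeta_{n-m}(G)\qquad\text{for all }n\geq 0\text{ and }m\geq 1, \]
with the convention $\zeta_j(G)=1$ for $j\leq 0$; the proposition is the case $m=n-k$, for which $1\leq m\leq n$. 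We argue by induction on $m$.

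For the base case $m=1$, we have $\gamma_1(G)=G$, so $[\zeta_n(G),\gamma_1(G)]=[\zeta_n(G),G]\subseteq\zeta_{n-1}(G)$ directly from the definition of the upper central series.

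For the inductive step, fix $m\geq 2$, assume the claim for $m-1$ and all $n$, and fix $n\geq 0$. Since $\gamma_m(G)=[\gamma_{m-1}(G),G]$, we have $[\zeta_n(G),\gamma_m(G)]=[[\gamma_{m-1}(G),G],\zeta_n(G)]$, so we apply the three subgroups lemma with $X=\gamma_{m-1}(G)$, $Y=G$, $Z=\zeta_n(G)$ and $N=\zeta_{n-m}(G)$. Two containments must be checked. First,
\[ [[G,\zeta_n(G)],\gamma_{m-1}(G)]\subseteq[\zeta_{n-1}(G),\gamma_{m-1}(G)]\subseteq\zeta_{(n-1)-(m-1)}(G)=\zeta_{n-m}(G), \]
where we used $[\zeta_n(G),G]\subseteq\zeta_{n-1}(G)$ and then the inductive hypothesis (applied to $m-1$, with $n-1$ in place of $n$). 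Second,
\[ [[\zeta_n(G),\gamma_{m-1}(G)],G]\subseteq[\zeta_{n-m+1}(G),G]\subseteq\zeta_{n-m}(G), \]
where we used the inductive hypothesis applied to $m-1$ (which gives $[\zeta_n(G),\gamma_{m-1}(G)]\subseteq\zeta_{n-(m-1)}(G)$) and then once more the defining property of the upper central series. This completes the induction.

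The only load-bearing ingredient is the three subgroups lemma; the rest is index bookkeeping, and the point that requires care is to assign the roles of $X,Y,Z$ so that the two hypotheses of the lemma coincide exactly with the two instances of the inductive hypothesis that are available. The degenerate cases, where a subscript drops to $0$ or below, are automatically consistent with the convention $\zeta_j(G)=1$ for $j\leq 0$ and cause no trouble.
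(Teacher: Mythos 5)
Your proof is correct. The paper does not give an argument of its own here --- it simply cites \cite[5.1.11(iii)]{Robinson} --- and your induction on $m$ via the three subgroups lemma is essentially the standard proof found in that reference, here written out in a self-contained way with the degenerate indices handled correctly by the convention $\zeta_j(G)=1$ for $j\leq 0$.
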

\begin{proof}See \cite[5.1.11(iii)]{Robinson}; note that there is a typo there and the subscript should say $j-i$ rather than $j-1$.
\end{proof}

Here, we are interested in investigating the analog of Proposition \ref{prop:group} for skew braces. We considered two analogs each for the lower and upper central series. Since the star product is not commutative on sub-skew braces, there are eight different natural analogs, as follows:
\begin{enumerate}[(A)]
\itemsep=-2pt
\item $\mathrm{Soc}_n(A)* A^{n-k} \subseteq \mathrm{Soc}_k(A)$;
\item $\mathrm{Soc}_n(A)* A^{(n-k)} \subseteq \mathrm{Soc}_k(A)$;
\item $A^{n-k}*\mathrm{Soc}_n(A) \subseteq \mathrm{Soc}_k(A)$;
\item $A^{(n-k)}*\mathrm{Soc}_n(A) \subseteq \mathrm{Soc}_k(A)$;
\item $\mathrm{Ann}_n(A)* A^{n-k} \subseteq \mathrm{Ann}_k(A)$;
\item $\mathrm{Ann}_n(A)* A^{(n-k)} \subseteq \mathrm{Ann}_k(A)$;
\item $A^{n-k}*\mathrm{Ann}_n(A) \subseteq \mathrm{Ann}_k(A)$;
\item $A^{(n-k)}*\mathrm{Ann}_n(A) \subseteq \mathrm{Ann}_k(A)$.
\end{enumerate}
As we shall now explain, it turns out that only (E) is true for all $n\geq 1$ and $0\leq k\leq n-1$ in general. 

For $(n,k) = (2,0)$, the inclusions (A),(B) both state that
\[ \mathrm{Soc}_2(A) * (A*A) =1.\]
For $(n,k)=(1,0)$, the inclusions (C),(D) both state that
\[A * \mathrm{Soc}(A) =1.\]
The next example shows that these fail in general.

\begin{example}
Let $A = (C_p\times C_q,\cdot,\circ)$ be the brace in Examples \ref{ex:pq}(i) and \ref{ex:pq'}. Both of
\begin{align*}
    \mathrm{Soc}_2(A) *(A*A) & = A*(A*A) = A^3 = C_p\times \{1\}\\
    A*\mathrm{Soc}(A) &= A*(A*A) =  A^3 = C_p\times \{1\}
\end{align*} 
are non-singleton based on what we have already computed.
\end{example}

For $(n,k)=(2,0)$, the inclusions (G),(H) both state that
\[ (A*A)*\mathrm{Ann}_2(A)=1,\]
which does not hold in general by the examples given in \cite[Section 3]{TsangGrun}. 

For $(n,k)=(2,0)$, the inclusions (E),(F) both state that
\[ \mathrm{Ann}_2(A) * (A*A) =1,\]
which always holds by \cite[Proposition 2.2]{TsangGrun}. Below, we shall extend its proof to show (E). The idea is very similar to the proof of Proposition \ref{prop:group} that we gave in version 1 of this paper on the arXiv.

\begin{theorem}\label{main thm} We always have
\[\mathrm{Ann}_n(A)* A^{n-k} \subseteq \mathrm{Ann}_k(A),\]
for all $n\geq 1$ and $0\leq k\leq n-1$.
\end{theorem}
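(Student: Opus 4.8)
The plan is to first reduce the statement to the special case $k=0$, namely $\mathrm{Ann}_n(A)*A^n=1$ for all $n\geq 1$, and then to prove that case by induction on $n$ (for all skew braces at once). For the reduction I would use two standard facts, both proved exactly as for groups (compare the handling of the socle series in the proof of Theorem~\ref{thm:nilpotency2}): the annihilator series passes to quotients, $\mathrm{Ann}_m(A/\mathrm{Ann}_k(A))=\mathrm{Ann}_{k+m}(A)/\mathrm{Ann}_k(A)$, and the left series of a quotient is the image of the left series, $(A/\mathrm{Ann}_k(A))^m=\pi(A^m)$ where $\pi$ is the quotient map. Granting the case $k=0$ for the skew brace $A/\mathrm{Ann}_k(A)$ with $m=n-k$, one gets $\pi(x)*\pi(w)=\overline 1$ for all $x\in\mathrm{Ann}_n(A)$ and $w\in A^{n-k}$; since $\pi(x)*\pi(w)=\pi(x*w)$, this means $x*w\in\mathrm{Ann}_k(A)$, and as $\mathrm{Ann}_k(A)$ is a subgroup of $(A,\cdot)$ the inclusion $\mathrm{Ann}_n(A)*A^{n-k}\subseteq\mathrm{Ann}_k(A)$ follows.

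For the base case $n=1$, $\mathrm{Ann}(A)*A=1$ is immediate from the definition of $\mathrm{Ann}(A)$. For $n\geq 2$ I would extract from the inductive hypothesis the two facts I need: $\mathrm{Ann}_{n-1}(A)*A^{n-1}=1$ (the hypothesis at level $n-1$), and $\mathrm{Ann}_n(A)*A^{n-1}\subseteq\mathrm{Ann}(A)$ (apply the hypothesis at level $n-1$ to $A/\mathrm{Ann}(A)$ and pull back as in the reduction). Now, for fixed $x$ the set $\{w : x*w=1\}$ is a subgroup of $(A,\cdot)$ by Lemma~\ref{lem:identities}(1), and $A^n=A*A^{n-1}$ is generated by the elements $a*z$ with $a\in A$ and $z\in A^{n-1}$, so it suffices to show $x*(a*z)=1$ for all $x\in\mathrm{Ann}_n(A)$, $a\in A$, $z\in A^{n-1}$. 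By Lemma~\ref{lem:identities}(2) applied to $(x\circ a)*z$,
\[ x*(a*z)=((x\circ a)*z)\cdot(x*z)^{-1}\cdot(a*z)^{-1}, \]
which reduces everything to computing the single term $(x\circ a)*z$.

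Controlling $(x\circ a)*z$ is the step I expect to be the main obstacle, since $x\circ a$ lies in no term of the annihilator series and Lemma~\ref{lem:identities}(2) by itself just expands it back into $x*(a*z)$. The device I would use is to peel off the $\circ$-commutator: writing $x\circ a=c\circ a\circ x$ with $c=x\circ a\circ\overline{x}\circ\overline{a}$, one has $c\in\mathrm{Ann}_{n-1}(A)$ because $\mathrm{Ann}_n(A)/\mathrm{Ann}_{n-1}(A)$ is central in $(A/\mathrm{Ann}_{n-1}(A),\circ)$. Expanding $(c\circ(a\circ x))*z$ by Lemma~\ref{lem:identities}(2) and using $c*z=1$ and $c*((a\circ x)*z)=1$ --- both because $c\in\mathrm{Ann}_{n-1}(A)$ while $z$ and $(a\circ x)*z$ lie in the left ideal $A^{n-1}$, so the inductive hypothesis $\mathrm{Ann}_{n-1}(A)*A^{n-1}=1$ applies --- collapses it to $(x\circ a)*z=(a\circ x)*z$. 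One more application of Lemma~\ref{lem:identities}(2) gives $(a\circ x)*z=(a*(x*z))\cdot(x*z)\cdot(a*z)$, in which $a*(x*z)=1$ since $x*z\in\mathrm{Ann}(A)$; hence $(x\circ a)*z=(x*z)\cdot(a*z)$.

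Substituting this into the displayed identity and using that $x*z\in\mathrm{Ann}(A)\subseteq Z(A,\cdot)$ commutes with $a*z$, all four factors cancel and $x*(a*z)=1$, completing the induction; the general case then follows by the reduction above. Apart from the key step, the remaining work should be routine bookkeeping: checking that each use of the inductive hypothesis is at a strictly smaller level, and that $(a\circ x)*z$ indeed lies in $A^{n-1}$ (it does, $A^{n-1}$ being a left ideal by Proposition~\ref{prop:left}).
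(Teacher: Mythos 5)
Your proof is correct. It shares the paper's overall architecture --- induction on $n$, fixing a single element of the annihilator term and showing that it star-kills the generators of the relevant term of the left series, with the crucial input being that the $\circ$-commutator of an element of $\mathrm{Ann}_n(A)$ with an arbitrary element lands in $\mathrm{Ann}_{n-1}(A)$ --- but the execution differs in two genuine ways. First, you dispose of the parameter $k$ at the outset by passing to $A/\mathrm{Ann}_k(A)$ and quoting $\mathrm{Ann}_m(A/\mathrm{Ann}_k(A))=\mathrm{Ann}_{k+m}(A)/\mathrm{Ann}_k(A)$ together with $(A/\mathrm{Ann}_k(A))^m=\pi(A^m)$, whereas the paper keeps $k$ and runs a descending induction on it, working modulo $\mathrm{Ann}_k(A)$ throughout via the homomorphism $\varphi_a(y)=(a*y)\mathrm{Ann}_k(A)$ defined on $(A^{n-(k+1)},\cdot)$; your fixed-point subgroup $\{w: x*w=1\}$ of $\lambda_x$ plays the role of $\varphi_a$. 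Second, your key computation avoids Lemma~\ref{lem:identities}(3) entirely: where the paper establishes $a*\lambda_x(y)\equiv\lambda_x(a*y)\pmod{\mathrm{Ann}_k(A)}$ using identities (2) and (3), you apply identity (2) twice around the factorization $x\circ a=(x\circ a\circ\overline{x}\circ\overline{a})\circ a\circ x$ to collapse $(x\circ a)*z$ to $(x*z)\cdot(a*z)$, and finish by centrality of $x*z\in\mathrm{Ann}(A)$. The quotient reduction buys a cleaner induction (only the case $k=0$ needs proving), at the mild cost of having to justify the two compatibility facts about the series under quotients --- both routine, and of the same kind as the facts already used without comment in the paper's proof of Theorem~\ref{thm:nilpotency2}. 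All the supporting details you flag as routine (that $\{w:x*w=1\}$ is a subgroup of $(A,\cdot)$, that $(a\circ x)*z\in A^{n-1}$ via Proposition~\ref{prop:left}, and that each appeal to the inductive hypothesis is at level $n-1$) do check out.
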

\begin{proof} For $n=1$, the claim simply states that
\[ \mathrm{Ann}(A) * A = 1,\]
which is trivial. For $n\geq 2$, suppose that the claim holds for $n-1$, and we shall use descending induction on $k$ to prove the claim.

For $k = n-1$, the claim simply states that
\[ \mathrm{Ann}_n(A)*A\subseteq \mathrm{Ann}_{n-1}(A), \]
which is clear. For $0\leq k \leq n-2$, let $a\in \mathrm{Ann}_n(A)$ and consider the map
\[ \varphi_{a}: (A^{n-(k+1)},\cdot) \longrightarrow (A/\mathrm{Ann}_k(A),\cdot);\,\ \varphi_{a}(y) = (a*y)\mathrm{Ann}_k(A).
\]
Let $x\in A$ and $y\in A^{n-(k+1)}$. We have
\begin{equation}\label{induction*}
a*y \in \mathrm{Ann}_n(A)*A^{n-(k+1)}   \subseteq \mathrm{Ann}_{k+1}(A)
\end{equation}
by induction on $k$. From Lemma \ref{lem:identities}(1), we then deduce that $\varphi_{a}$ is a homomorphism. Observe that we have
\begin{align}\notag
\varphi_a(x*y) &= \varphi_a(\lambda_x(y)\cdot y^{-1})\\\label{varphi break}
& = \varphi_a(\lambda_x(y)) \varphi_a(y)^{-1}\\\notag
& = (a*\lambda_x(y))(a*y)^{-1}\mathrm{Ann}_{k}(A)
\end{align}
because $\lambda_x(y)\in A^{n-(k+1)}$ by Proposition \ref{prop:left}. But note that
\[ \overline{a}\circ x\circ a \circ \overline{x} \in \mathrm{Ann}_{n-1}(A)\]
because $a\in \mathrm{Ann}_n(A)$, and by induction on $n$, we have 
\[ \mathrm{Ann}_{n-1}(A)* A^{(n-1)-k} \subseteq \mathrm{Ann}_k(A). \]
From these observations and  Lemma \ref{lem:identities}(2),(3), we see that
\begin{align*}
a* \lambda_x(y) \equiv (a \circ (\overline{a}\circ x\circ a \circ \overline{x})) * \lambda_x(y)  \equiv \lambda_x(a*y) \pmod{\mathrm{Ann}_k(A)},
\end{align*}
and together with (\ref{induction*}), this yields
\begin{align*}
    \varphi_a(x*y) & = \lambda_x(a*y) (a*y)^{-1}\mathrm{Ann}_k(A)\\
    &= (x*(a*y))\mathrm{Ann}_k(A) \\
    & = \mathrm{Ann}_k(A).
\end{align*} 
Since these elements $x*y$ generate $A^{n-k}$ with respect to $\cdot$, we see that
\[ \varphi_a(z) = \mathrm{Ann}_k(A), \mbox{ namely } a*z \in \mathrm{Ann}_k(A),\]
for all $z\in A^{n-k}$, and $a\in \mathrm{Ann}_n(A)$ was arbitrary. Hence, we get the desired inclusion, and this completes the proof of the theorem.
\end{proof}

The proof of Theorem \ref{main thm} cannot be modified to prove (F). In the induction step, for $a\in \mathrm{Ann}_n(A)$ we can similarly consider the map
\[ \psi_a : (A^{(n-(k+1))},\cdot) \longrightarrow (A/\mathrm{Ann}_k(A),\cdot);\,\ \psi_a(y) = (a*y)\mathrm{Ann}_k(A), \]
which can be assumed to be a homomorphism by induction on $k$ as in (\ref{induction*}). In order to prove (F), we would have to show that
\[ \psi_a(y*x) = \mathrm{Ann}_k(A),\]
for all $x\in A$ and $y\in A^{(n-(k+1))}$. But we cannot split
\[ \psi_a(y*x) = \psi_a(\lambda_y(x)x^{-1}) = \psi_a(\lambda_y(x)) \psi_a(x)^{-1}\]
as in (\ref{varphi break}) because $\lambda_y(x)$ and $x$ are not always elements of $A^{(n-(k+1))}$ here. There is no way to amend this problem. In fact, in general, (F) is false.

For $(n,k)= (3,0)$, the inclusion (F) states that
\[ \mathrm{Ann}_3(A)*A^{(3)}=1,\]
which fails to hold in general. To exhibit a counterexample, we shall use the following construction of skew braces that is due to the author  \cite[Section 8]{TsangIto}. Here, for simplicity, we assume that $B$ is abelian, which is not needed for the construction. 

\begin{lemma}\label{lem:construction} Let $B = (B,+)$ and $C=(C,+)$ be abelian groups. Let
\begin{align*}
\phi : C\longrightarrow \mathrm{Aut}(B);&\,\ c\mapsto \phi_c\\
\psi: B \longrightarrow\mathrm{Aut}(C);&\,\ b\mapsto \psi_b
\end{align*}
be homomorphisms such that
\begin{equation}\label{phipsicondition}
\mathrm{Im}(\psi_b-\mathrm{id}_C) \subseteq \ker(\phi),
\end{equation}
for all $b\in B$. On the Cartesian product $B\times C$, define
\begin{align*}
    (b,c)\cdot (x,y) & = (b+ \phi_{c}(x),c+y)\\
    (b,c) \circ (x,y) & = (b+x,c+\psi_b(y))
\end{align*}
for all $b,x\in B$ and $c,y\in C$. Then $A = (B\times C,\cdot,\circ)$ is a skew brace, and
\begin{align}\label{BC*}
 (b,c) * (x,y) &= (( \phi_{-c}-\mathrm{id}_B)(x), (\psi_b-\mathrm{id}_C)(y))\\\label{BC[]}
 [(b,c), (x,y)] & = ( (\mathrm{id}_B -\phi_y)(b) + (\phi_c-\mathrm{id}_B)(x), 0)
\end{align}
for all $b,x\in B$ and $c,y\in C$, where $[\,\ ,\,\ ]$ denotes the commutator in $(A,\cdot)$.
\end{lemma}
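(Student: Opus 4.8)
The plan is to identify the two underlying group structures as semidirect products, verify the skew brace axiom through the $\lambda$-map, and then obtain \eqref{BC*} and \eqref{BC[]} by direct substitution. First I would note that $(A,\cdot)$ is precisely the semidirect product of $B$ by $C$ with $C$ acting on $B$ through $\phi$, while $(A,\circ)$ is the semidirect product of $C$ by $B$ (with $B$ acting via $\psi$) written with the $B$-coordinate in front; hence both are groups, both have identity $(0,0)$, and a short computation yields
\[ (b,c)^{-1} = \bigl(-\phi_{-c}(b),\,-c\bigr), \qquad \overline{(b,c)} = \bigl(-b,\,-\psi_{-b}(c)\bigr). \]
Feeding these into the definition of the lambda map gives the clean formula
\[ \lambda_{(b,c)}\bigl((x,y)\bigr) = (b,c)^{-1}\cdot\bigl((b,c)\circ(x,y)\bigr) = \bigl(\phi_{-c}(x),\,\psi_b(y)\bigr). \]

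Next I would use the standard fact that the brace relation \eqref{brace relation} holds if and only if every $\lambda_a$ is an endomorphism of $(A,\cdot)$ (each $\lambda_a$ being automatically bijective, as a $\cdot$-translate of the $\circ$-translation $b\mapsto a\circ b$). So it remains to check that $\lambda_{(b,c)}$ respects $\cdot$. The $C$-coordinates of $\lambda_{(b,c)}\bigl((x,y)\cdot(u,v)\bigr)$ and of $\lambda_{(b,c)}(x,y)\cdot\lambda_{(b,c)}(u,v)$ agree at once because $C$ is abelian and $\psi_b\in\mathrm{Aut}(C)$; comparing the $B$-coordinates — and using that $C$ abelian makes the automorphisms $\phi_c$ commute with one another — reduces the claim to the single identity $\phi_{(\psi_b - \mathrm{id}_C)(y)} = \mathrm{id}_B$ for all $b\in B$, $y\in C$, which is exactly hypothesis \eqref{phipsicondition}. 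This establishes that $A$ is a skew brace; if one wishes, a parallel computation shows directly that $a\mapsto\lambda_a$ is a homomorphism $(A,\circ)\to\mathrm{Aut}(A,\cdot)$, with the same condition \eqref{phipsicondition} surfacing, though one may instead just quote \cite[Proposition 1.9]{GV}.

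It then remains to read off the two formulas. For \eqref{BC*} I would use $(b,c)*(x,y)=\lambda_{(b,c)}(x,y)\cdot(x,y)^{-1}$: the $C$-coordinate comes out to $\psi_b(y)-y=(\psi_b-\mathrm{id}_C)(y)$, and the $B$-coordinate is $\phi_{-c}(x)-\phi_{(\psi_b-\mathrm{id}_C)(y)}(x)$, which collapses to $(\phi_{-c}-\mathrm{id}_B)(x)$ upon one more application of \eqref{phipsicondition}. For \eqref{BC[]} the $C$-coordinate of the commutator $(b,c)\cdot(x,y)\cdot(b,c)^{-1}\cdot(x,y)^{-1}$ vanishes since $C$ is abelian, and expanding the $B$-coordinate in the group $(A,\cdot)$ gives $b+\phi_c(x)-\phi_y(b)-x=(\mathrm{id}_B-\phi_y)(b)+(\phi_c-\mathrm{id}_B)(x)$.

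Everything here is mechanical; the one genuinely load-bearing step is the verification that $\lambda_{(b,c)}$ is multiplicative, since that is the only place where \eqref{phipsicondition} is truly needed (the simplification in \eqref{BC*} is, in disguise, the same computation). So in the write-up I would carry out that comparison of $B$-coordinates in full and dispatch the remaining semidirect-product bookkeeping — associativity, inverses, the commutator expansion — briskly.
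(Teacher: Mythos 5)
Your proposal is correct. The verifications of \eqref{BC*} and \eqref{BC[]} are, modulo packaging, the same computations the paper performs: the paper expands $(b,c)*(x,y)=(b,c)^{-1}\cdot((b,c)\circ(x,y))\cdot(x,y)^{-1}$ directly (which is exactly your $\lambda_{(b,c)}(x,y)\cdot(x,y)^{-1}$, with the same inverse formula $(b,c)^{-1}=(\phi_{-c}(-b),-c)$ and the same invocation of \eqref{phipsicondition} to kill the term $\phi_{\psi_b(y)-y}$), and the commutator is expanded identically. Where you genuinely diverge is the first assertion: the paper simply cites an external lemma (from the author's paper on Gr\"un's lemma) for the fact that $A$ is a skew brace, whereas you prove it from scratch by recognizing $(A,\cdot)$ and $(A,\circ)$ as semidirect products and using the standard criterion that, given two group structures with $\lambda_a(x)=a^{-1}\cdot(a\circ x)$ bijective, the brace relation \eqref{brace relation} is equivalent to each $\lambda_a$ being multiplicative for $\cdot$; your reduction of that multiplicativity to $\phi_{(\psi_b-\mathrm{id}_C)(y)}=\mathrm{id}_B$, i.e.\ to \eqref{phipsicondition}, is exactly right. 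Your route costs a little more writing but buys a self-contained proof and makes transparent that \eqref{phipsicondition} is precisely the condition needed, which the citation obscures; both are perfectly valid.
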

\begin{proof} That $A = (B\times C,\cdot,\circ)$ is a skew brace follows from \cite[Lemma 8.1]{TsangGrun}. 

For all $b,x\in B$ and $c,y\in C$, we have
\begin{align*}
(b,c)*(x,y) & = (\phi_{-c}(-b),-c)\cdot (b+x,c+\psi_b(y)) \cdot (\phi_{-y}(-x),-y)\\
& = (\phi_{-c}(x),\psi_b(y))\cdot (\phi_{-y}(-x),-y)\\
&= (\phi_{-c}(x) + \phi_{\psi_b(y) -y}(-x), \psi_b(y) -y )\\
& = (( \phi_{-c}-\mathrm{id}_B)(x), (\psi_b-\mathrm{id}_C)(y))
\end{align*}
by the condition (\ref{phipsicondition}), and 
\begin{align*}
[(b,c),(x,y)] & = (b+\phi_c(x),c+y) \cdot (\phi_{-c}(-b),-c)\cdot (\phi_{-y}(-x),-y)\\
& = (b+\phi_c(x) - \phi_y(b),y)\cdot(\phi_{-y}(-x),-y)\\
&= ( (\mathrm{id}_B -\phi_y)(b) + (\phi_c-\mathrm{id}_B)(x), 0).
\end{align*}
This proves the identities (\ref{BC*}) and (\ref{BC[]}).
\end{proof}

\begin{example}Let $p\geq 5$ be any prime. Let us take $B = C= (\mathbb{F}_p^4,+)$. For brevity, let $\vec{e}_1,\vec{e}_2,\vec{e}_3,\vec{e}_4$ denote the standard basis of $\mathbb{F}_p^4$, and identify
\[ \mathrm{Aut}(B) = \mathrm{GL}_4(\mathbb{F}_p) = \mathrm{Aut}(C)\]
via this basis. Consider the homomorphisms defined by
\begin{align*}
    \phi : C \longrightarrow \mathrm{Aut}(B);&\,\ \phi_{\vec{e}_4} = \left[\begin{smallmatrix} 1 &1  && \\ & 1 &1&\\ && 1 & 1 \\ &&&1 \end{smallmatrix}\right],\, \ker(\phi) = \langle \vec{e}_1,\vec{e}_2,\vec{e}_3\rangle,\\
    \psi : B \longrightarrow \mathrm{Aut}(C);&\,\ \psi_{\vec{e}_3} = \left[\begin{smallmatrix} 1 & 1 && \\ & 1 &1&\\ && 1 &  \\ &&&1 \end{smallmatrix}\right],\, \ker(\psi) = \langle\vec{e}_1,\vec{e}_2,\vec{e}_4\rangle.
\end{align*}
For any $c\in C$, we have
\[ \mathrm{Im}(\phi_c - \mathrm{id}_B)\subseteq \langle \vec{e}_1,\vec{e}_2,\vec{e}_3\rangle,\quad \ker(\phi_c - \mathrm{id}_B)  \supseteq \langle\vec{e}_1\rangle,\]
with equalities when $c\in \langle\vec{e}_4\rangle$ but $c\neq \vec{0}$. For any $b\in B$, we similarly have
\[\mathrm{Im}(\psi_b-\mathrm{id}_C)   \subseteq \langle \vec{e}_1,\vec{e}_2\rangle,\quad \ker(\psi_b -\mathrm{id}_C)  \supseteq \langle \vec{e}_1,\vec{e}_4\rangle,\]
with equalities when $b\in \langle\vec{e}_3\rangle$ but $b\neq \vec{0}$. We see that (\ref{phipsicondition}) is satisfied.

On the one hand, it is clear from (\ref{BC*}) and the above that
\begin{align*}
A^{(2)}  = \langle \vec{e}_1,\vec{e}_2,\vec{e}_3\rangle \times \langle \vec{e}_1,\vec{e}_2\rangle,\quad 
A^{(3)} = \langle\vec{0}\rangle \times \langle \vec{e}_1,\vec{e}_2\rangle.
\end{align*}
On the other hand, it is not hard to use (\ref{BC*}) and (\ref{BC[]}) to verify that
\begin{align}\label{3inclusions}
\mathrm{Ann}_1(A) & \supseteq \langle\vec{e}_1\rangle \times \langle \vec{e}_1\rangle,\\\notag
\mathrm{Ann}_2(A) & \supseteq \langle \vec{e}_1,\vec{e}_2\rangle \times \langle \vec{e}_1,\vec{e}_2\rangle,\\\notag
\mathrm{Ann}_3(A)&\supseteq \langle \vec{e}_1,\vec{e}_2,\vec{e}_3\rangle \times \langle\vec{e}_1,\vec{e}_2,\vec{e}_3\rangle.
\end{align}
More specifically, for any $b,c\in \langle \vec{e}_1,\vec{e}_2,\vec{e}_3\rangle$ and $x,y \in \mathbb{F}_p^4$, note that
\begin{align*}
(b,c) * (x,y) & = (\vec{0},(\psi_b-\mathrm{id}_C)(y))\\
(x,y) * (b,c) & = ((\phi_{-y}-\mathrm{id}_B)(b), (\psi_{x}-\mathrm{id}_C)(c))\\
[(b,c),(x,y)] & = ((\mathrm{id}_B-\phi_y)(b),\vec{0})
\end{align*}
because $c\in \ker(\phi)$. By the definition of $\psi$, we have
\[ (\psi_b - \mathrm{id}_C)(y) \in \begin{cases}
 \langle \vec{0}\rangle & \mbox{for }b\in \langle\vec{e}_1,\vec{e}_2\rangle,\\
    \langle \vec{e}_1,\vec{e}_2\rangle &\mbox{for }b\in \langle\vec{e}_1,\vec{e}_2,\vec{e}_3\rangle,
\end{cases}\]
\[ (\psi_x-\mathrm{id}_C)(c) \in
\begin{cases}
 \langle\vec{0}\rangle &\mbox{for }c\in \langle\vec{e}_1\rangle,\\
 \langle\vec{e}_1\rangle &\mbox{for }c\in\langle\vec{e}_1,\vec{e}_2\rangle,\\
  \langle\vec{e}_1,\vec{e}_2\rangle &\mbox{for }c\in\langle\vec{e}_1,\vec{e}_2,\vec{e}_3\rangle.
\end{cases}
\]
By the definition of $\phi$, we similarly have
\[ (\phi_{-y}-\mathrm{id}_B)(b), (\mathrm{id}_B-\phi_y)(b) \in
\begin{cases}
 \langle \vec{0}\rangle & \mbox{for }b\in\langle\vec{e}_1\rangle,\\
 \langle\vec{e}_1\rangle & \mbox{for }b\in \langle\vec{e}_1,\vec{e}_2\rangle,\\
 \langle\vec{e}_1,\vec{e}_2\rangle & \mbox{for }b\in \langle\vec{e}_1,\vec{e}_2,\vec{e}_3\rangle.
\end{cases}
\]
The three inclusions in (\ref{3inclusions}) follow from these observations.

Now, a simple calculation using (\ref{BC*}) yields 
\[ (\vec{e}_3,\vec{0})* (\vec{0},\vec{e}_2) = (\vec{0},(\psi_{\vec{e}_3}-\mathrm{id}_C)(\vec{e}_2))= (\vec{0},\vec{e}_1).\]
This implies that $\mathrm{Ann}_3(A) * A^{(3)} \neq 1$, giving a desired counterexample.
\end{example}

We conclude that the inclusions (A) $\sim$ (H) are all false except for (E).

\section{Another lower central series and annihilator nilpotency}\label{sec:central}

In the previous sections, we mostly discussed the left, right, socle, and annihilator series. But the different types of nilpotency that they define are not compatible. In this section, we shall delve into annihilator nilpotency -- this is the strongest type of nilpotency by Theorem \ref{thm:nilpotency2}.

Although annihilator nilpotency was defined via the annihilator series, an analog of the upper central series, it also admits an equivalent characterization in terms of an analog of the lower central series, as follows.

The \textit{lower central series} of $A$ is defined by
\begin{equation}\label{gamma series}
\Gamma_1(A) = A,\quad \Gamma_{n+1}(A) = \langle \Gamma_{n}(A)*A,\,  A*\Gamma_n(A),\, [A,\Gamma_n(A)] \rangle,
\end{equation}
for all $n\in \mathbb{N}$, where the commutator $[\,\ ,\,\ ]$ and the subgroup generation $\langle \,\ \rangle$ are both taken in $(A,\cdot)$. This was introduced by Bonatto and Jedli\v{c}ka in \cite{central}, but note that the index starts from $0$ in their definition. Before \cite[Lemma 2.6]{central}, they mentioned that they do not know whether the $\Gamma_n(A)$ are ideals or even left ideals of $A$. However, as noted in \cite{nilpotent'} (without proof), for example, they are ideals of $A$. This may be proven in a very similar manner to the proof of Proposition \ref{prop:right}. We spell out the details for the convenience of the reader.

\begin{proposition}\label{prop:gamma'}For all $n\geq 1$, we have that $\Gamma_n(A)$ is an ideal of $A$.
\end{proposition}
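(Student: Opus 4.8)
The plan is to mimic the proof of Proposition~\ref{prop:right} verbatim, using induction on $n$. The base case $n=1$ is trivial since $\Gamma_1(A)=A$. For the inductive step, assume $\Gamma_n(A)$ is an ideal of $A$; I must then show $\Gamma_{n+1}(A)$ is a left ideal, is normal in $(A,\cdot)$, and is normal in $(A,\circ)$. Write $\Gamma := \Gamma_n(A)$ and $\Gamma' := \Gamma_{n+1}(A)$, so $\Gamma'$ is generated in $(A,\cdot)$ by the elements $\{g*a\},\{a*g\},\{[a,g]\}$ with $a\in A$, $g\in\Gamma$. The overall structure is: handle the three generator types in each of the three normality/ideal checks.

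For the left-ideal property I would fix $a\in A$ and check $\lambda_a$ maps each generator type into $\Gamma'$; since $\lambda_a\in\Aut(A,\cdot)$ and the generators $\cdot$-generate $\Gamma'$, this suffices. For generators of the form $g*b$ (with $g\in\Gamma$, $b\in A$): by Lemma~\ref{lem:identities}(3), $\lambda_a(g*b) = (a\circ g\circ\overline a)*\lambda_a(b)$, and $a\circ g\circ\overline a\in\Gamma$ by normality of $\Gamma$ in $(A,\circ)$, so this lies in $\Gamma*A\subseteq\Gamma'$. For generators $b*g$: again by Lemma~\ref{lem:identities}(3), $\lambda_a(b*g) = (a\circ b\circ\overline a)*\lambda_a(g)$, and $\lambda_a(g)\in\Gamma$ since $\Gamma$ is a left ideal, so this lies in $A*\Gamma\subseteq\Gamma'$. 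For commutator generators $[b,g]$: $\lambda_a([b,g]) = [\lambda_a(b),\lambda_a(g)]$ since $\lambda_a$ is a group automorphism of $(A,\cdot)$, and $\lambda_a(g)\in\Gamma$, so this lies in $[A,\Gamma]\subseteq\Gamma'$. Hence $\lambda_a(\Gamma')\subseteq\Gamma'$.

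For normality in $(A,\cdot)$ I would conjugate each generator type by an arbitrary $a\in A$. For $g*b$: by Lemma~\ref{lem:identities}(1), $a\cdot(g*b)\cdot a^{-1} = (g*a)^{-1}\cdot(g*(a\cdot b))$, which lies in $\Gamma'$ since both factors are of the form $\Gamma*A$ (up to $\cdot$-inverse, and $\Gamma'$ is a group). For $b*g$: by Lemma~\ref{lem:identities}(1) applied with roles switched, $a\cdot(b*g)\cdot a^{-1}$ — here I need the identity $a\cdot(b*g)\cdot a^{-1} = (b*a)^{-1}\cdot(b*(a\cdot g))$ as well, but $a\cdot g$ need not lie in $\Gamma$; instead I should use that $\Gamma'$ already contains $[A,\Gamma']$ once normality in $(A,\cdot)$ is in place, so a cleaner route is: conjugation by $a$ multiplies a generator $w$ by the commutator $[a,w]$, and it suffices to show $[a,w]\in\Gamma'$ for $w$ each generator type — but $\Gamma'\ni[A,\Gamma]$ only, not $[A,\Gamma']$ a priori. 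The safe approach, matching Proposition~\ref{prop:right}(2), is to verify directly that $a\cdot w\cdot a^{-1}\in\Gamma'$ for $w\in\{g*b,\,b*g,\,[b,g]\}$ using Lemma~\ref{lem:identities}(1) and the fact that $\Gamma$ is a left ideal and normal in $(A,\cdot)$; for the commutator generator, $a[b,g]a^{-1}=[ab a^{-1}, aga^{-1}]$ and $aga^{-1}\in\Gamma$ by normality of $\Gamma$ in $(A,\cdot)$, so this lies in $[A,\Gamma]\subseteq\Gamma'$. Finally, for normality in $(A,\circ)$ I would copy Proposition~\ref{prop:right}(3) exactly: for $y\in\Gamma'$ and $a\in A$, note $y\cdot(y*\overline a)\in\Gamma'$ (using $y*\overline a\in\Gamma'*A\subseteq\langle\Gamma_{n+1}(A)*A\rangle\subseteq\Gamma_{n+2}(A)\subseteq\Gamma_{n+1}(A)$, wait — this inclusion $\Gamma_{n+2}\subseteq\Gamma_{n+1}$ is not automatic), so instead I use directly that $y*\overline a\in\Gamma'$ because once $\Gamma'$ is a left ideal one has $y*\overline a=\lambda_y(\overline a)\overline a^{-1}$... this is not obviously in $\Gamma'$ either. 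The correct reasoning, as in Proposition~\ref{prop:right}(3), is: $\Gamma'$ is a left ideal and normal in $(A,\cdot)$ by the previous two steps, so by Lemma~\ref{lem:identities}(4), $a\circ y\circ\overline a = a\cdot\lambda_a\big(y\cdot(y*\overline a)\big)\cdot a^{-1}$; here $y*\overline a\in A*\Gamma'$, and since $\Gamma'\supseteq[A,\Gamma_n]$ with $\Gamma'$ already shown to be a left ideal, we need $y*\overline a\in\Gamma'$ — which holds because $\Gamma'$ is a left ideal (so closed under $c*\cdot$ for all $c$, as $y\mapsto c*y$ maps $\Gamma'$ into $\Gamma'$: indeed $c*y=\lambda_c(y)y^{-1}$ with $\lambda_c(y)\in\Gamma'$ and $y\in\Gamma'$). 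Thus $y\cdot(y*\overline a)\in\Gamma'$, then $\lambda_a$ of it is in $\Gamma'$ (left ideal), then conjugating by $a$ in $(A,\cdot)$ keeps it in $\Gamma'$ (normal in $(A,\cdot)$), so $a\circ y\circ\overline a\in\Gamma'$.

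The main obstacle is bookkeeping: I must be careful to establish the three properties of $\Gamma_{n+1}(A)$ in the right order (left ideal, then $\cdot$-normal, then $\circ$-normal), since the $\circ$-normality argument via Lemma~\ref{lem:identities}(4) reuses the first two; and in each case I must treat all three families of generators $g*a$, $a*g$, $[a,g]$ separately, invoking the relevant parts of Lemma~\ref{lem:identities} and the inductive hypothesis that $\Gamma_n(A)$ is a genuine ideal. I expect the write-up to be essentially a routine adaptation of Proposition~\ref{prop:right}, with the only genuinely new ingredient being the handling of the commutator generators $[a,g]$, which is straightforward because $\lambda_a$ and $\cdot$-conjugation are both automorphisms of $(A,\cdot)$ and $\Gamma_n(A)$ is $\cdot$-normal and a left ideal.
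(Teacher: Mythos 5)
Your overall strategy --- induction on $n$, establishing the three properties in the order (left ideal, then normal in $(A,\cdot)$, then normal in $(A,\circ)$), and checking each of the three families of generators $x*b$, $b*x$, $[b,x]$ with $x\in\Gamma_n(A)$ and $b\in A$ --- is exactly the paper's, and your treatment of the left-ideal step and of the generators $x*b$ and $[b,x]$ under $\cdot$-conjugation is correct. But at the two places where you hesitate, you do not actually close the argument, and in one of them the justification you settle on is wrong. For the $\cdot$-normality of the generators $b*x$ you end with a statement of intent rather than a computation. The missing observation is that $b*x=\lambda_b(x)\cdot x^{-1}$ already lies in $\Gamma_n(A)$, since $\Gamma_n(A)$ is a left ideal and a subgroup of $(A,\cdot)$; hence
\[ a\cdot(b*x)\cdot a^{-1}=[a,\,b*x]\cdot(b*x)\in[A,\Gamma_n(A)]\cdot(A*\Gamma_n(A))\subseteq\Gamma_{n+1}(A). \]
This is precisely the route you started down (``conjugation multiplies $w$ by $[a,w]$'') and then abandoned because you only placed $w$ in $\Gamma_{n+1}(A)$; the point is that this particular generator sits one level up, in $\Gamma_n(A)$ itself.

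Second, in the $\circ$-normality step your final justification of $y*\overline{a}\in\Gamma_{n+1}(A)$ is incorrect: the left-ideal property gives $c*y\in\Gamma_{n+1}(A)$ for $y\in\Gamma_{n+1}(A)$ and arbitrary $c\in A$, i.e.\ closure with $\Gamma_{n+1}(A)$ in the \emph{second} slot of the star product, whereas $y*\overline{a}$ has $y$ in the \emph{first} slot, so this property does not apply. The correct reasoning, which is the paper's, is $y*\overline{a}\in\Gamma_{n+1}(A)*A\subseteq\Gamma_{n+2}(A)\subseteq\Gamma_{n+1}(A)$. The inclusion $\Gamma_{k+1}(A)\subseteq\Gamma_k(A)$, which you flagged as ``not automatic'' and then dropped, is in fact automatic from the definition: it is trivial for $k=1$, and if $\Gamma_{k+1}(A)\subseteq\Gamma_k(A)$ then each of the three generating sets defining $\Gamma_{k+2}(A)$ is contained in the corresponding one for $\Gamma_{k+1}(A)$. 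With these two repairs your proof coincides with the one in the paper.
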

\begin{proof}The case $n=1$ is trivial. Now, suppose that $\Gamma_n(A)$ is an ideal of $A$.
\begin{enumerate}[(1)]
\item $\Gamma^{(n+1)}(A)$ is a left ideal of $A$: For any $a,b\in A$ and $x\in \Gamma_n(A)$, the fact that $\Gamma_n(A)$ is an ideal of $A$ implies $a\circ x\circ \overline{a},\, \lambda_a(x)\in \Gamma_n(A)$, and so
\begin{align*} 
\lambda_a(x*b)& = (a\circ x\circ \overline{a})*\lambda_a(b) \in \Gamma_n(A)*A\\
\lambda_a(b*x) & = (a\circ b\circ \overline{a})*\lambda_a(x) \in A*\Gamma_n(A)\\
\lambda_a([b,x]) & = [\lambda_a(b),\lambda_a(x)]\in [A,\Gamma_n(A)]
\end{align*}
by Lemma \ref{lem:identities}(3). Since the elements $x*b,b*x,[b,x]$ generate $\Gamma_{n+1}(A)$ with respect to $\cdot$ and $\lambda_a\in\Aut(A,\cdot)$, this yields $\lambda_a(\Gamma_{n+1}(A))\subseteq \Gamma_{n+1}(A)$, for all $a\in A$. Hence, indeed $\Gamma_{n+1}(A)$ is a left ideal of $A$.
\item $\Gamma_{n+1}(A)$ is normal in $(A,\cdot)$: For any $a,b\in A$ and $x\in \Gamma_n(A)$, we have
\begin{align*}
a\cdot (x*b)\cdot a^{-1} & = (x*a)^{-1}\cdot (x*(a\cdot b)) \in \Gamma_{n}(A)*A
\end{align*}
by Lemma \ref{lem:identities}(1). Now, the hypothesis that $\Gamma_n(A)$ is an ideal of $A$ also implies that $b*x, axa^{-1}\in \Gamma_n(A)$, so we see that
\begin{align*}
a\cdot (b*x) \cdot a^{-1} & = [a,b*x]\cdot (b*x) \in [A,\Gamma_n(A)]\cdot (A*\Gamma_n(A)),\\
a\cdot [b,x]\cdot a^{-1} & = [aba^{-1},axa^{-1}]\in [A,\Gamma_n(A)].
\end{align*}
Since the elements $x*b,b*x,[b,x]$ generate $\Gamma_{n+1}(A)$ with respect to $\cdot$, it then follows that $\Gamma_{n+1}(A)$ is normal in $(A,\cdot)$.
\item $\Gamma_{n+1}(A)$ is normal in $(A,\circ)$: For any $a\in A$ and $y\in \Gamma_{n+1}(A)$, clearly
\[ y(y*\overline{a}) \in \Gamma_{n+1}(A)\Gamma_{n+2}(A)\subseteq \Gamma_{n+1}(A).\]
Since $\Gamma_{n+1}(A)$ is a left ideal of $A$ and is normal in $(A,\cdot)$, we have
\[ a\circ y\circ \overline{a} = a\cdot \lambda_a(y(y*\overline{a}))\cdot a^{-1}\in \Gamma_{n+1}(A)\]
by Lemma \ref{lem:identities}(4), and so $\Gamma_{n+1}(A)$ is normal in $(A,\circ)$.
\end{enumerate}
This completes the proof that $\Gamma_{n+1}(A)$ is an ideal of $A$.
\end{proof}

The next result is from \cite[Theorem 2.7]{central}.

\begin{theorem}\label{thm:AnnGamma}
For all $n\geq1$, we have 
\[ \mathrm{Ann}_{n-1}(A)=A\,\ \iff \,\ \Gamma_n(A)=1.\]
\end{theorem}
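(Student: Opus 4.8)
The plan is to prove the equivalence by establishing a tight relationship between the terms of the lower central series $\Gamma_n(A)$ and the annihilator series $\mathrm{Ann}_m(A)$, namely that $\Gamma_{n}(A) \subseteq \mathrm{Ann}_{m}(A)$ whenever $\Gamma_{n+m}(A) = 1$ is available, and dually. Concretely, I would prove two inclusions by induction. For the direction $\mathrm{Ann}_{n-1}(A) = A \Rightarrow \Gamma_n(A) = 1$, I would show by induction on $j$ that
\[ \Gamma_{j+1}(A) \subseteq \mathrm{Ann}_{(n-1)-j}(A) \]
for all $0 \le j \le n-1$. The base case $j=0$ is the hypothesis $\Gamma_1(A) = A = \mathrm{Ann}_{n-1}(A)$. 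For the induction step, assume $\Gamma_{j+1}(A) \subseteq \mathrm{Ann}_{(n-1)-j}(A)$; then for any $a \in A$ and $x \in \Gamma_{j+1}(A)$, all three generators $x*a$, $a*x$, and $[a,x]$ of $\Gamma_{j+2}(A)$ lie in $\mathrm{Ann}_{(n-1)-j-1}(A)$ by the definition of the annihilator series (since $\mathrm{Ann}(A/\mathrm{Ann}_{\ell}(A)) = \mathrm{Ann}_{\ell+1}(A)/\mathrm{Ann}_{\ell}(A)$ means exactly that $x * a, a*x \in \mathrm{Ann}_{\ell}(A)$ and $[a,x] \in \mathrm{Ann}_\ell(A)$ whenever $x \in \mathrm{Ann}_{\ell+1}(A)$). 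Since $\mathrm{Ann}_{(n-1)-j-1}(A)$ is a subgroup of $(A,\cdot)$ (indeed an ideal), it contains the subgroup generated by these elements, which is $\Gamma_{j+2}(A)$. Taking $j = n-1$ gives $\Gamma_n(A) \subseteq \mathrm{Ann}_0(A) = 1$.

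For the converse $\Gamma_n(A) = 1 \Rightarrow \mathrm{Ann}_{n-1}(A) = A$, I would show by induction on $j$ that
\[ \Gamma_{n-j}(A) \subseteq \mathrm{Ann}_j(A) \]
for all $0 \le j \le n-1$. The base case $j = 0$ is $\Gamma_n(A) = 1 = \mathrm{Ann}_0(A)$. For the induction step, suppose $\Gamma_{n-j}(A) \subseteq \mathrm{Ann}_j(A)$; I must show $\Gamma_{n-j-1}(A) \subseteq \mathrm{Ann}_{j+1}(A)$, i.e. that the image of $\Gamma_{n-j-1}(A)$ in $A/\mathrm{Ann}_j(A)$ lies in $\mathrm{Ann}(A/\mathrm{Ann}_j(A))$. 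For $x \in \Gamma_{n-j-1}(A)$ and any $a \in A$, by the very definition \eqref{gamma series} we have $x * a, \, a * x, \, [a,x] \in \Gamma_{n-j}(A) \subseteq \mathrm{Ann}_j(A)$, which says precisely that $x \bmod \mathrm{Ann}_j(A)$ star-commutes with everything on both sides and additively commutes with everything; by the characterization \eqref{Ann def} of the annihilator this places $x \bmod \mathrm{Ann}_j(A)$ in $\mathrm{Ann}(A/\mathrm{Ann}_j(A))$, so $x \in \mathrm{Ann}_{j+1}(A)$. Taking $j = n-1$ yields $A = \Gamma_1(A) \subseteq \mathrm{Ann}_{n-1}(A)$, hence equality.

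The arguments are genuinely parallel and short; the one point requiring a little care is making sure the membership condition $x \in \mathrm{Ann}_{\ell+1}(A)$ is correctly unpacked as the three conditions $a * x = 1$, $x * a = 1$, $[a,x]=1$ in the quotient $A/\mathrm{Ann}_\ell(A)$ — that is, that the analog of the center relevant here is the annihilator and not merely the socle or the additive center. This is immediate from \eqref{Ann def}, which describes $\mathrm{Ann}(A)$ as $\{x : x*a = a*x = 1 \text{ for all } a\} \cap Z(A,\cdot)$, and the condition $x \in Z(A,\cdot)$ is exactly $[a,x] = 1$ for all $a$. I would also note that all quotients appearing are legitimate skew braces because each $\mathrm{Ann}_\ell(A)$ is an ideal by Proposition~\ref{prop:Ann} and the lattice theorem, just as in \eqref{ann series}. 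The main (minor) obstacle is purely bookkeeping: keeping the two indices $n-j$ and $j$ (respectively $(n-1)-j$ and $j+1$) aligned through the induction and correctly handling the off-by-one coming from $\Gamma$ being indexed from $1$ while $\mathrm{Ann}$ is indexed from $0$. No deeper difficulty arises because, unlike the left and right series which are only left ideals or require the more delicate Lemma~\ref{lem:identities} manipulations, here the defining relation of $\Gamma_{n+1}(A)$ bundles together exactly the three generators that the annihilator kills, so the correspondence is essentially tautological once set up.
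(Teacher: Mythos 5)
Your proof is correct and takes essentially the same approach as the paper: the paper packages your two inductions into a single iterated biconditional, $\Gamma_i(A)\subseteq \mathrm{Ann}_j(A) \iff \Gamma_{i+1}(A)\subseteq \mathrm{Ann}_{j-1}(A)$, applied $n-1$ times, but the underlying observation --- that the three generators of $\Gamma_{i+1}(A)$ are exactly what membership in the annihilator of the quotient kills --- is identical.
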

\begin{proof}For any $i,j\in \mathbb{N}$, it follows from the definition that
\[ \Gamma_i(A) \subseteq \mathrm{Ann}_j(A) \,\ \iff \,\ \Gamma_{i+1}(A)\subseteq \mathrm{Ann}_{j-1}(A). \]
Applying this equivalence $n-1$ times yields
\[ \Gamma_{1}(A) \subseteq \mathrm{Ann}_{n-1}(A)\,\ \iff \,\ \Gamma_n(A) \subseteq \mathrm{Ann}_0(A) ,\]
which proves the claim.
\end{proof}

Therefore, perhaps \eqref{gamma series} and the annihilator series \eqref{ann series} are, respectively, the ``correct" analogs of the lower and upper central series,  in the setting of skew braces. There is another reason to believe that this is indeed the case. A categorical approach was employed to study skew braces in \cite{SKB}, where it was shown that skew braces form a semi-abelian category, and the so-called \textit{Huq commutator} of ideals is given as follows. Let us remark in passing that the categorical aspects of Hopf braces, an algebraic structure defined in \cite{Hopf} as a Hopf-theoretic generalization of skew braces, were also studied in \cite{GS}.

\begin{definition}\label{def:commutator} For any ideals $I$ and $J$ of $A$, define their \textit{commutator} by
\begin{align*}
 [I,J]^A & = \langle [I,J],\, [I,J]_\circ,\, I*J\rangle^A \\
& =  \langle [x,y],\, [x,y]_\circ,\, x*y: x\in I, \, y\in J
\rangle^A,
\end{align*}
where $[\,\ ,\,\ ]$ and $[\,\ ,\,\ ]_\circ$ denote the additive and multiplicative commutators, respectively, and the angle brackets $\langle \,\ \rangle^A$ denote ideal generation in $A$. 
\end{definition}

Before proceeding, let us make a few remarks about this commutator.

\begin{remark}\label{remark} Let $I$ and $J$ be ideals of $A$.
\begin{enumerate}[(1)]
\item The commutator of $I$ and $J$ was originally described as
\[ [I,J]^A = \langle[x,y],\, [x,y]_\circ,\, (x\circ y)\cdot (x\cdot y)^{-1}\rangle^A\]
in the statement of \cite[Proposition 3.18]{SKB}. But
\[(x\circ y)\cdot (x\cdot y)^{-1} = x\cdot (x*y)\cdot x^{-1},\]
so clearly Definition \ref{def:commutator} is an equivalent definition.
\item The commutator of $I$ and $J$ can also be equivalently defined as
\begin{align*}
[I,J]^A & = \langle [I,J],\, I*J,\, J*I\rangle^A\\
& = \langle [x,y],\, x*y,\, y*x: x\in I,\, y\in J\rangle^A.
\end{align*}
This is because the congruence equations
\begin{align*}
 y\circ x&\equiv x\circ y \equiv x\cdot y \equiv y\cdot x\pmod{\langle [I,J]_\circ,\, I*J,\, [I,J]\rangle^A}\\
  y\circ x&\equiv y\cdot x\equiv x\cdot y \equiv x\circ y\pmod{\langle J*I,\, [I,J],\, I*J\rangle^A}
\end{align*}
hold for all $x\in I$ and $y\in J$.
\item The commutator of $I$ and $J$ is symmetric, namely
\[[I,J]^A = [J,I]^A.\]
This is clear from the observation (2) above.
\end{enumerate}
\end{remark} 

Unlike the star product, it does not matter whether we take the commutator from the left or from the right. Hence, using this commutator instead of the star product, there is a unique natural way to define an analog of the lower central series, namely we define
\begin{equation}\label{gamma' series} \Gamma_1'(A) = A,\quad \Gamma_{n+1}'(A) = [A,\Gamma_n'(A)]^A,\end{equation}
for all $n\in\mathbb{N}$. This coincides with the series \eqref{gamma series}. We note that Remark \ref{remark} and Proposition~\ref{prop:gamma} were observed in \cite{central-ideal} as well.

\begin{proposition}\label{prop:gamma}For all $n\geq1$, we have $\Gamma_n(A) = \Gamma_n'(A)$.
\end{proposition}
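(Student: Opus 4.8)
The plan is to prove the equality $\Gamma_n(A) = \Gamma_n'(A)$ by induction on $n$, where the only real work is showing that the two recursive steps agree. The base case $n=1$ is trivial since both are defined to be $A$. For the inductive step, assuming $\Gamma_n(A) = \Gamma_n'(A) =: N$, I must show
\[ \langle N*A,\, A*N,\, [A,N] \rangle = [A,N]^A, \]
where the left-hand side is subgroup generation in $(A,\cdot)$ and the right-hand side is the Huq commutator from Definition \ref{def:commutator}, which uses \emph{ideal} generation in $A$. Note that by Proposition \ref{prop:gamma'} the left-hand side is already an ideal of $A$, so it is stable under $\lambda_a$, under $\cdot$-conjugation, and under $\circ$-conjugation; this is the fact that lets the two notions of generation coincide here.

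The argument then splits into two inclusions. For ``$\subseteq$'': by Remark \ref{remark}(2) we have $[A,N]^A = \langle [A,N],\, A*N,\, N*A\rangle^A$, and this ideal certainly contains the generators $x*a$, $a*x$, and $[a,x]$ for $a\in A$, $x\in N$; hence it contains the subgroup they generate in $(A,\cdot)$, which is the left-hand side. For ``$\supseteq$'': the left-hand side $L := \langle N*A,\, A*N,\, [A,N]\rangle$ contains all the generators $[x,y]$, $[x,y]_\circ$, $x*y$ appearing in Definition \ref{def:commutator} — the additive commutators $[a,x]$ and star products $a*x$, $x*a$ are literally among the defining generators, and the multiplicative commutator $[a,x]_\circ$ lies in $L$ because, using $a\circ x = a\cdot \lambda_a(x)$ together with Lemma \ref{lem:identities}, one can rewrite $[a,x]_\circ$ as a product of additive commutators and star products of elements of $A$ and $N$ (here I would invoke the congruence computations recorded in Remark \ref{remark}(2), which show precisely that $x\circ y$ and $x\cdot y$ differ by an element of $\langle [I,J],\, I*J,\, J*I\rangle$). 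Finally, since $L$ is an ideal of $A$ by Proposition \ref{prop:gamma'}, it contains the ideal generated by those generators, which is exactly $[A,N]^A$. Combining the two inclusions gives $\Gamma_{n+1}(A) = \Gamma_{n+1}'(A)$, completing the induction.

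The main obstacle is bookkeeping rather than conceptual: one must check carefully that the multiplicative commutator $[a,x]_\circ$ really lands in $L$, i.e.\ that passing between $\circ$ and $\cdot$ only introduces additive commutators and star products that $L$ already absorbs. This is where Remark \ref{remark}(2) does the heavy lifting, since it is exactly the statement that $\langle [I,J]_\circ,\, I*J,\, [I,J]\rangle^A = \langle [I,J],\, I*J,\, J*I\rangle^A$; one should verify that the congruences there in fact hold modulo the \emph{subgroup} $L$ (not merely the ideal), which follows because $N = \Gamma_n(A)$ is an ideal, so $L$ is already normalized by both group operations and stable under all $\lambda_a$. Once that is in hand, everything else is a routine comparison of generating sets.
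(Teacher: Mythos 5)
Your argument is correct and is essentially the paper's proof: the paper disposes of the proposition in one line by citing exactly the two facts you use, namely Remark \ref{remark}(2) to rewrite the Huq commutator with generators $[x,y]$, $x*y$, $y*x$, and Proposition \ref{prop:gamma'} to see that $\Gamma_{n+1}(A)$ is already an ideal, so subgroup generation and ideal generation of that set coincide. Your extra care about verifying the congruences modulo the subgroup rather than the ideal is harmless but not needed once Remark \ref{remark}(2) is taken as given.
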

\begin{proof} This follows from Remark \ref{remark}(2) and Proposition \ref{prop:gamma'}.
\end{proof}

Therefore, the series \eqref{gamma series} can be defined using a commutator operator in the same way the lower central series of a group is defined. Our discussion so far also suggests that annihilator nilpotency is perhaps the more appropriate analog of nilpotency in the setting of skew braces.

In the case of finite groups, a famous theorem of Fitting (see \cite[(5.2.8)]{Robinson} for example) states that the product of finitely many nilpotent normal subgroups is again nilpotent. This fact is required to define the Fitting subgroup of a finite group.

In the case of finite skew braces, the analog of Fitting's Theorem is false by \cite[Example~B]{central-ideal} -- there is a skew brace of order $32$ that is not annihilator nilpotent but is expressible as the product of two ideals that are annihilator nilpotent as skew braces. To resolve this issue, the authors of \cite{central-ideal} introduced the notion of ``relative" annihilator nilpotency for ideals, as follows.

Let $I$ be an ideal of $A$. According to \eqref{gamma' series} and Proposition \ref{prop:gamma}, as a skew brace, the lower central series of $I$ is defined by
\[ \Gamma_1(I) =I,\quad \Gamma_{n+1}(I) = [I,\Gamma_n(I)]^I,\]
for all $n\in\mathbb{N}$. According to \cite[Section 5]{central-ideal}, as an ideal of $A$, the \textit{lower central series of $I$ with respect to $A$} is defined by
\[ \Gamma_1(I)^A=I,\quad \Gamma_{n+1}(I)^A= [I,\Gamma_n(I)^A]^A, \]
for all $n\in\mathbb{N}$; the superscript $A$ on $\Gamma_n(I)$ is missing in \cite{central-ideal} and it was a typo. Note that this is not the same lower central series in \cite[Definition 2.3]{central}.

\begin{definition}An ideal $I$ of $A$ is called  \textit{annihilator nilpotent with res\-pect to $A$} if $\Gamma_n(I)^A=1$ for some $n$. 
\end{definition}

Let $I$ be an ideal of $A$. If $I$ is annihilator nilpotent with respect to $A$, then $I$ is an annihilator nilpotent skew brace, but not vice versa. One can also see this by considering upper central series. Indeed, it is not hard to see that $I$ is annihilator nilpotent if and only if there exists a finite chain
\[ 1 = J_0 \leq J_1\leq \cdots \leq J_{m-1} \leq J_m = I\]
of ideals of $I$ such that $J_i/J_{i-1}\leq\mathrm{Ann}(I/J_{i-1})$ for all $1\leq i\leq m$. As noted in \cite[Definition~5.1]{central-ideal}, we similarly have that $I$ is annihilator nilpotent with respect to $A$ if and only if there exists a finite chain
\[ 1 = J_0 \leq J_1\leq \cdots \leq J_{m-1} \leq J_m = I\]
of ideals of $A$ such that $J_i/J_{i-1}\leq\mathrm{Ann}(I/J_{i-1})$ for all $1\leq i\leq m$; note that it says ``ideals of $I$" in \cite{central-ideal} and it was a typo. By strengthening the notion of annihilator nilpotency for ideals this way, the following result was obtained in \cite[Theorem 5.3]{central-ideal}.

\begin{theorem}\label{thm:fitting} Let $I$ and $J$ be any ideals of $A$. If $I$ and $J$ are annihilator nilpotent with respect to $A$, then $IJ$ is also annihilator nilpotent with respect to $A$. In fact, if $\Gamma_{m}(I)^A=1$ and $\Gamma_{n}(J)^A=1$, then $\Gamma_{m+n-1}(IJ)^A=1$.
\end{theorem}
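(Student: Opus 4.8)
My plan is to transcribe, into the language of ideals and the Huq commutator $[\,\cdot\,,\,\cdot\,]^A$, the classical proof of Fitting's theorem for groups; conveniently, that proof uses no three-subobject (Hall--Witt) identity, only the elementary fact that $[M,N]\subseteq M\cap N$ for normal subgroups $M,N$. So the first thing I would record is the skew-brace analogue: $[I,J]^A\subseteq I\cap J$ for all ideals $I,J$ of $A$. By Remark~\ref{remark}(2) it suffices to see that each generator $[x,y]$, $x*y$, $y*x$ (with $x\in I$, $y\in J$) lies in the ideal $I\cap J$: for $[x,y]$ this is immediate from normality of $I$ and $J$ in $(A,\cdot)$; for $x*y=\lambda_x(y)\cdot y^{-1}$, membership in $J$ comes from $J$ being a left ideal, while membership in $I$ follows by writing $x\circ y=x\cdot\lambda_x(y)$ and using normality of $I$ in $(A,\circ)$ together with $\lambda_x\in\Aut(A,\cdot)$, and $y*x$ is symmetric. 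I also use freely that $[\,\cdot\,,\,\cdot\,]^A$ sends pairs of ideals to ideals and is monotone in each argument (clear from Definition~\ref{def:commutator}), that $IJ$ and $I\cap J$ are ideals of $A$, and hence that every term $\Gamma_n(I)^A$, $\Gamma_n(J)^A$, $\Gamma_n(IJ)^A$ occurring below is an ideal of $A$.

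Next I would establish two technical facts. The first is a left distributivity law,
\[ [IJ,X]^A=\bigl\langle[I,X]^A,\,[J,X]^A\bigr\rangle^A \qquad(I,J,X\mbox{ ideals of }A), \]
where ``$\supseteq$'' is monotonicity and ``$\subseteq$'' amounts to checking, for $a\in I$, $b\in J$, $x\in X$, that $[ab,x]$, $x*(ab)$, $(ab)*x$ all lie in $N:=\langle[I,X]^A,[J,X]^A\rangle^A$. The first two follow from Lemma~\ref{lem:identities}(1) and normality of $N$ in $(A,\cdot)$; the last --- the only mildly delicate point, since $(a\cdot b)*x$ is not one of the identities in Lemma~\ref{lem:identities} --- is obtained by rewriting $a\cdot b=a\circ\lambda_{\overline a}(b)$, with $\lambda_{\overline a}(b)\in J$ because $J$ is a left ideal, and applying Lemma~\ref{lem:identities}(2), using $[I,J]^A\subseteq I\cap J$ to recognise the inner term $a*(\lambda_{\overline a}(b)*x)$ as a generator of $[I,X]^A$. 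The second fact, which follows from the first by a routine induction, is compatibility with quotients: for every ideal $K$ of $A$ and every ideal $Y$ of $A$,
\[ \Gamma_n\bigl(YK/K\bigr)^{A/K}=\bigl(\Gamma_n(Y)^A\cdot K\bigr)/K \qquad(n\geq1); \]
the engine is $[YK/K,\,ZK/K]^{A/K}=\bigl([Y,Z]^A\cdot K\bigr)/K$, which drops out of the distributivity law together with $[K,\,\cdot\,]^A\subseteq K$.

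With these preliminaries I would argue by induction on $m+n$. If $m=1$, then $I=\Gamma_1(I)^A=1$, so $IJ=J$ and $\Gamma_{m+n-1}(IJ)^A=\Gamma_n(J)^A=1$; the case $n=1$ is symmetric. Suppose $m,n\geq2$. Put $K:=\Gamma_{m-1}(I)^A$, pass to $\overline A:=A/K$, and let $\overline I,\overline J,\overline{IJ}$ denote the images of $I$, $J$, $IJ$ in $\overline A$, so $\overline{IJ}=\overline I\,\overline J$. By the compatibility statement, $\Gamma_{m-1}(\overline I)^{\overline A}=\bigl(\Gamma_{m-1}(I)^A\cdot K\bigr)/K=1$ and $\Gamma_n(\overline J)^{\overline A}=\bigl(\Gamma_n(J)^A\cdot K\bigr)/K=1$. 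Since $(m-1)+n<m+n$, the theorem applied in $\overline A$ gives $\Gamma_{m+n-2}(\overline{IJ})^{\overline A}=1$, hence $\Gamma_{m+n-2}(IJ)^A\subseteq K=\Gamma_{m-1}(I)^A$ by compatibility once more; running the same argument with the roles of $I$ and $J$ interchanged gives $\Gamma_{m+n-2}(IJ)^A\subseteq\Gamma_{n-1}(J)^A$. Therefore $\Gamma_{m+n-2}(IJ)^A\subseteq D:=\Gamma_{m-1}(I)^A\cap\Gamma_{n-1}(J)^A$, an ideal of $A$.

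The theorem then follows in one line. By the distributivity law and monotonicity,
\[ \Gamma_{m+n-1}(IJ)^A=\bigl[IJ,\,\Gamma_{m+n-2}(IJ)^A\bigr]^A\subseteq[IJ,D]^A=\bigl\langle[I,D]^A,\,[J,D]^A\bigr\rangle^A, \]
and $[I,D]^A\subseteq[I,\Gamma_{m-1}(I)^A]^A=\Gamma_m(I)^A=1$ while $[J,D]^A\subseteq[J,\Gamma_{n-1}(J)^A]^A=\Gamma_n(J)^A=1$, so $\Gamma_{m+n-1}(IJ)^A=1$; the first assertion of the theorem is the special case. In short, the conceptual skeleton is the classical Fitting argument verbatim, with $[I,J]^A\subseteq I\cap J$ playing the role of $[M,N]\subseteq M\cap N$; I expect the real work to be not conceptual but the bookkeeping behind the distributivity law $[IJ,X]^A=\langle[I,X]^A,[J,X]^A\rangle^A$ and the quotient-compatibility of $\Gamma_n(\,\cdot\,)^A$, where the two group operations have to be shuffled carefully through Lemma~\ref{lem:identities} and the ideal axioms.
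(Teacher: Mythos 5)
The survey states Theorem~\ref{thm:fitting} without proof, quoting it from \cite[Theorem 5.3]{central-ideal}, so there is no in-paper argument to compare against; judged on its own merits, your proof is correct and self-contained. The skeleton --- the containment $[I,J]^A\subseteq I\cap J$, the distributivity law $[IJ,X]^A=\langle[I,X]^A,[J,X]^A\rangle^A$, quotient-compatibility of $\Gamma_n(\,\cdot\,)^A$, and induction on $m+n$ by passing to $A/\Gamma_{m-1}(I)^A$ and $A/\Gamma_{n-1}(J)^A$ --- is the classical Fitting argument, and the skew-brace-specific steps all check out: the only genuinely delicate generator, $(a\cdot b)*x$, is correctly dispatched by rewriting $a\cdot b=a\circ\lambda_{\overline a}(b)$ and applying Lemma~\ref{lem:identities}(2), where the containment $[J,X]^A\subseteq J\cap X\subseteq X$ is exactly what lets you read $a*(\lambda_{\overline a}(b)*x)$ as a generator of $[I,X]^A$, and the symmetric generating set from Remark~\ref{remark}(2) is what makes $x*(ab)$ and $y*x$ land where you claim. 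Two small points of bookkeeping: the membership $[ab,x]\in N$ comes from the group identity $[ab,x]=a[b,x]a^{-1}\cdot[a,x]$ and normality of $N$, not from Lemma~\ref{lem:identities}(1) (which handles $x*(ab)$); and the quotient-compatibility statement tacitly uses that the ideal of $A/K$ generated by the image of a set $S$ equals the image of $\langle S\rangle^A\cdot K$, which is worth one explicit sentence via the correspondence theorem for ideals. An alternative route to the same bound $m+n-1$ expands $\Gamma_{m+n-1}(IJ)^A$ into iterated commutators with entries in $\{I,J\}$ and applies pigeonhole; your quotient induction avoids any such expansion at the price of the two preparatory lemmas, and both approaches ultimately rest on the same two pillars, namely $[I,J]^A\subseteq I\cap J$ and distributivity of $[\,\cdot\,,\,\cdot\,]^A$ over products of ideals.
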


As introduced in \cite{central-ideal}, the \textit{Fitting ideal} of $A$ is the ideal $\mathrm{Fit}(A)$ generated by the ideals of $A$ that are annihilator nilpotent with respect to $A$. Thanks to Theorem \ref{thm:fitting}, we know that $\mathrm{Fit}(A)$ is annihilator nilpotent with respect to $A$ when $A$ is finite. The authors of \cite{central-ideal} also introduced \textit{Frattini ideal} and proved an analog of a celebrated result of Gasch\"{u}tz that relates the Frattini and Fitting subgroups (see \cite[Theorem 5.10]{central-ideal}).

\subsection*{Acknowledgements}
This research is supported by JSPS KAKENHI Grant Number 24K16891. The author recognizes that Theorem \ref{main thm}, which is a natural extension of her previous work \cite[Proposition 2.2]{TsangGrun}, arose out of discussion with her student Risa Arai. The author would also like to thank the referees for their helpful comments and suggestions, which helped improve the paper significantly.




{\small}

\EditInfo{March 4, 2025}{July 27, 2025}{David Towers and Ivan Kaygorodov}

\end{document}